\numberwithin{equation}{section}
\def\3bar{{|\hspace{-.02in}|\hspace{-.02in}|}}
\def\E{{\mathcal{E}}}
\def\T{{\mathcal{T}}}
\def\bb{{\mathbf{b}}}
\def\pT{{\partial T}}
\def\W{{\mathcal{W}}}
\def\CQ{{\mathcal{Q}}}
\def\bx{{\mathbf{x}}}
\def\bn{{\mathbf{n}}}
\newtheorem{remark}{Remark}[section]
\newtheorem{algorithm}{Primal-Dual Weak Galerkin Algorithm}[section]
\title {Low Regularity Primal-Dual Weak Galerkin Finite Element Methods for  Convection-Diffusion Equations}
\author{Chunmei Wang \thanks{Department of Mathematics \& Statistics, Texas Tech University, Lubbock, TX 79409, USA (chunmei.wang@ttu.edu). The research of Chunmei Wang was partially supported by National Science Foundation Award DMS-1849483.} %\and Junping Wang\thanks{Division of Mathematical
%Sciences, National Science Foundation, Alexandria, VA 22314
%(jwang@nsf.gov). The research of Junping Wang was supported by the NSF IR/D program, while working at National Science Foundation. However, any opinion, finding, and conclusions or recommendations expressed in this material are those of the author and do not necessarily reflect the views of the National Science Foundation.} 
  \and Ludmil Zikatanov  \thanks{Department of Mathematics, Penn State University, University Park, PA, 16802, USA (ludmil@ psu.edu).
    The work of Zikatanov is supported in part by NSF
    DMS-1720114 and DMS-1819157.    
}}
\begin{document}

\maketitle

\begin{abstract}
  We propose a numerical method for convection-diffusion problems under
  low regularity assumptions. We derive the method and analyze it
  using the primal-dual weak Galerkin (PDWG) finite element
  framework. The Euler-Lagrange formulation resulting from the
  PDWG scheme yields a system of equations
  involving not only the equation for the primal variable but
  also its adjoint for the dual variable. We show that the proposed
  PDWG method is stable and convergent. We also derive a priori error
  estimates for the primal variable
  in the 
  $H^{\epsilon}$-norm for $\epsilon\in [0,\frac12)$.  A series of
    numerical tests that validate the theory are presented as
    well.
\end{abstract}

\begin{keywords}
low regularity solutions, primal-dual finite element method, weak
Galerkin, convection-diffusion equation, discrete weak gradient,
polytopal partitions.
\end{keywords}

\begin{AMS}
Primary, 65N30, 65N15, 65N12, 74N20; Secondary, 35B45, 35J50,
35J35
\end{AMS}

\pagestyle{myheadings}
\section{Introduction}
In this paper we consider the model
convection diffusion problem
for an unknown function $u$ satisfying
\begin{equation}\label{model}
\begin{split}
-\nabla\cdot (a \nabla u) +\nabla\cdot (\bb u)=&f, \ \qquad
\text{in}\quad \Omega,\\ u=&g_1, \qquad \text{on}\quad \Gamma_D,\\ (-a
\nabla u +\bb u)\cdot \bn=&g_2, \qquad \text{on}\quad \Gamma_N.
\end{split}
\end{equation}
Here, $\Omega\subset \mathbb R^d (d=2, 3)$ is an open bounded domain
whose boundary $\partial \Omega$ is a Lipschitz polyhedron (polygon
for $d=2$) with $\Gamma_D \cup \Gamma_N=\partial \Omega$.  Further, $\bn$
is the unit outward normal direction to $\Gamma_N$. We assume that the
convection vector $\bb \in [W^{1, \infty}(\Omega)]^d$ is bounded, and
the diffusion tensor $a=\{a_{ij}\}_{d\times d}$ is symmetric and
positive definite; i.e., there exists a constant $\alpha>0$, such that
$$
\xi^T a\xi\ge \alpha \xi^T\xi, \qquad \forall \xi \in  \mathbb R^d.
$$
In addition, we assume that the diffusion tensor $a$ and the
convection vector $\bb$ are uniformly piecewise continuous functions.

%% The convection-diffusion equation, one of the most important
%% mathematical models in engineering and physics, is a combination of
%% the diffusion equation and the convection (advection) equation. The
%% convection-diffusion equation describes physical phenomena where
%% particles, energy, or other physical quantities are transferred inside
%% a physical system due to two processes: diffusion and convection. In
%% many of the applications, the size of the diffusion tensor $a$ is much
%% smaller in several orders of magnitude compared with the size of the
%% convection vector $\bb$. The very small diffusion tensor $a$ leads to
%% the convection-dominated diffusion equation which is a singularly
%% perturbed problem and is thus quite challenging to solve numerically
%% in the sense that the numerical solution of this boundary value
%% problem typically possesses layers (internal and/or boundary), which
%% are thin regions where the solution and/or its derivatives change
%% rapidly.

As is well known~\cite{er2002} the standard Galerkin finite element
approximation for the convection-diffusion often exhibit nonphysical
oscillations, especially when the convection is dominating (i.e. the
eigenvalues of $a$ are small compared to the size of $\bb$) and they
also do not provide accurate approximations unless the mesh size is
sufficiently small.  A variety of numerical stabilization techniques
have been developed to resolve this challenge in the past several
decades such as fitted mesh methods \cite{mos1996, rst2008}, fitted
operator methods \cite{rst2008}, and the methods using approximations of the fluxes~\cite{xl1999, lazarovz2004}.  Such methods are dominated by
  upwind-type schemes and are applicable to the problems of
  complicated domains or layer structures. The upwind-type schemes
  were first proposed in the finite difference methods, and later were
  extended to finite element methods. The key idea in the upwind
  methods is to obtain a stabilized discretization method by adding an
  artificial diffusion/viscosity to balance the convection term. Among
  the various schemes, the streamline upwind Petrov-Galerkin method
  proposed by Hughes and Brooks is an efficient numerical method
  \cite{hb1979, bh1982} in improving the stability of the standard
  Galerkin method through the use of an additional stabilization term
  in the upwind direction to suppress most of the nonphysical
  oscillations while keeping the accuracy. However, one disadvantage
  in the upwind methods lies in that too much artificial diffusion
  leads to smearing layers, especially for the problems in multiple
  dimensions. Bakhvalov \cite{b1969} proposed the optimization of
  numerical meshes, where the meshes were generated from projections
  of an equidistant partition of layer functions. Another effective
  idea of piecewise-equidistant meshes was proposed by Shishkin
  \cite{shishkin1990}. The adaptive method has been proposed
  \cite{b1976, boor1973} to address a variety of difficulties
  including layers \cite{reinhardt1981}. The discontinuous Galerkin
  (DG) method \cite{b1973, dd1975, rh1973, wheeler1978, abcm2002} is
  an effective technique for solving conservation laws for elliptic
  problems. Furthermore, DG schemes include a upwinding which is
  equivalent to the stabilization for the convection-diffusion
  problems \cite{johnson1987, rst2008, am2009, bo1999, bhs2006, guzman2006, hss2002, hsbb2006, hw2008}.  Recently, Burman and He \cite{burmanhe} developed a primal dual mixed finite element method for indefinite advection-diffusion equations with optimal a priori error estimates in the energy and the $L^2$ norm for the primal variable when the Pecket number is low. In \cite{burman}, Burman, Nechita and Oksanen devised a stabilized finite element method for a kind of inverse problems subject to the convection-diffusion equation in the diffusion-dominated regime. Some error estimates in local $H^1$ or $L^2$ norms were derived for their numerical approximations.

The goal of this paper is to derive and analyze a finite element
discretization for the convection-diffusion problem (\ref{model}). We
have chosen primal-dual weak Galerkin (PDWG) framework for such
derivation and analysis. The PDWG was introduced and successfully used for the numerical solution of elliptic Cauchy problems~\cite{w2018,ww2018}, elliptic equations in non-divergence
form~\cite{ww2016}, and Fokker-Planck equations~\cite{ww2017}.  The idea of PDWG is to enhance the stability and solvability of the numerical solutions by a combined consideration of the primal and the dual/adjoint equation through the use of properly defined stabilizers or smoothers. The very similar idea has been developed and utilized by Burman and his collaborators in \cite{burman2013, burman2014, burman2014-2, burman2016, burman2017, burman2018, burmanhe, burman2018-2, burman} in other finite element contexts. This choice is motivated by the facts that the PDWG techniques are natural for deriving error estimates under low regularity assumptions, and also allow for general polyhedral (not necessarily simplicial) finite
elements. Methods for convection-diffusion equations on such general
meshes have been also developed in the context of Virtual Finite
Element methods (VEM)~\cite{an,bbm,irisarri}, DG methods and
Hybrydized DG methods
(HDG)~\cite{chsszz,chen2012,ghmszz,hszz,cdgrs}. While in many cases
variants of the HDG, VEM and WG methods are shown to be
equivalent~\cite{cockburn,chen,hongxu}, for low regularity solutions
such equivalences are not helping much in the error estimates. Our
analysis here shows that the PDWG method essentially can assumes very
low regularity for the primal problem so that solutions with
discontinuities can be approximated well and further allow us to give
a priori estimates for the primal variable in $H^{\epsilon}$-norm for
$0\le \epsilon< \frac12$.

%% Compared with the standard Galerkin finite element method, our
%% analysis of error estimates is based on a low
%% $H^{2-\epsilon}(\Omega)\ (0\leq\epsilon<\frac{1}{2}) $ regularity
%% assumption for the solution of the dual problem.
%% In such a way we only need to assume
%% $u\in H^\epsilon(\Omega)$ for the primal variable.
%% Consequently, in the
%% our PD-WG method, the Dirichlet boundary value is required to satisfy
%% $g_1\in L^2(\Gamma_D)$ which is much weaker than the usual condition
%% of $g_1\in H^{\frac{1}{2}}(\Gamma_D)$ in the standard Galerkin finite
%% element method. Furthermore, the Dirichlet boundary data $g_1$ in our
%% PD-WG method appears in the boundary integral which greatly reduces
%% possible noises in the applications.  The distinguished
%% feature/advantage of this PD-WG method lies on ultra weak regularity
%% assumptions for the primal variable $u$ in the mathematical
%% convergence theory since all the derivatives are applied only to the
%% dual variable or, equivalently, to the test function in
%% \eqref{weakform}. 

The paper is organized as follows. Section 2 is devoted to a
discussion/review of the weak differential operators as well as their
discretizations. In Section 3, the primal-dual weak Galerkin algorithm
for the convection-diffusion problem (\ref{model}) is
proposed. Section 4 presents some technical results, including the
critical {\em inf-sup} condition, which plays an important role in
deriving the error analysis in Section 6. The error equations for the
PD-WG scheme are derived in Section 5. In Section 6, the error
estimates in an optimal order are derived for the primal-dual WG
finite element method in some discrete Sobolev norms. Finally in
Section 7, a series of numerical results are reported to demonstrate
the effectiveness and accuracy of the numerical method developed in
the previous sections.

\section{Preliminaries and notation}
Throughout the paper, we follow the usual notation for Sobolev spaces
and norms. For any open bounded domain $D\subset \mathbb{R}^d$ with
Lipschitz continuous boundary, we use $\|\cdot\|_{s,D}$ and
$|\cdot|_{s,D}$ to denote the norm and seminorm in the Sobolev space
$H^s(D)$ for any $s\ge 0$, respectively. The norms in $H^{s}(D)$ for $s<0$ are defined by duality with the norms in $H^{|s|}(D)$.
The inner product in $H^s(D)$
is denoted by $(\cdot,\cdot)_{s,D}$. The space $H^0(D)$ coincides with
$L^2(D)$, for which the norm and the inner product are denoted by
$\|\cdot \|_{D}$ and $(\cdot,\cdot)_{D}$, respectively. When
$D=\Omega$, or when the domain of integration is clear from the
context, we drop the subscript $D$ in the norm and the inner product
notation. For convenience, throughout the paper, we use ``$\lesssim$
'' to denote ``less than or equal to up to a general constant
independent of the mesh size or functions appearing in the
inequality".

We begin by introducing the weak formulation of the
convection-diffusion model problem (\ref{model}).
After integration by parts the variational problem can
be stated as follows: Find $u\in L^2(\Omega)$
satisfying
\begin{equation}\label{weakform}
 (u,  \nabla \cdot (a\nabla w)+\bb  \cdot \nabla w) =-(f, w)+\langle g_2, w\rangle_{\Gamma_N}+\langle g_1, a\nabla w \cdot \bn\rangle_{\Gamma_D}, \quad \forall w\in W,
\end{equation}
where $W=\{w\in H^1(\Omega), a\nabla w \in H(div; \Omega),  w|_{\Gamma_D}=0, a\nabla w \cdot \bn|_{\Gamma_N}=0\}.$

The dual problem corresponding to this primal formulation
then is: For a given $\psi\in H^{-\epsilon}(\Omega)$,
find $\lambda \in W$
such that $\lambda|_{\Gamma_D}=0$, 
$a\nabla\lambda\cdot\bn|_{\Gamma_N}=0$, and
\begin{equation}\label{EQ:dual-form}
  (v, \nabla \cdot(a \nabla \lambda)+\bb \cdot \nabla \lambda) = (\psi,v),
  \qquad \forall v\in H^{\epsilon}(\Omega),
\end{equation}
In the following we assume that the solution to the this dual problem is $H^{2-\epsilon}(\Omega)$-regular for some $\epsilon\in [0,\frac12)$ and satisfies the reqularity estimate
  \begin{equation}\label{EQ:H2Regularity1}
  \|\lambda\|_{2-\epsilon}\lesssim \|\psi\|_{-\epsilon}.
  \end{equation}  
  This reqularity assumption also implies that
  when $\psi\equiv 0$, then {\em the dual problem \eqref{EQ:dual-form} has
    one and only one solution, namely the trivial solution
    $\lambda\equiv 0$}.

  \begin{remark}
  Notice that the primal and the dual equations are unrelated to each
  other in the continuous model. However, combining the discrete
  primal equation with the discrete dual equation through some
  stabilization terms in the context of weak Galerkin finite element
  methods gives rise to an efficient numerical scheme.
\end{remark}

\section{Discrete Weak Differential Operators}\label{Section:Hessian}

Denote by ${\cal L}:=\nabla \cdot (a\nabla)$ the diffusive operator in
\eqref{model}. The operator ${\cal L}$ and the gradient operator are
the two principle differential operators used in the weak formulation
(\ref{weakform}) for the convection-diffusion equation
(\ref{model}). This section will introduce a weak version of ${\cal
  L}$ and the gradient operator; see \cite{wy3655} for more
information.

Let ${\cal T}_h$ be a finite element partition of the domain $\Omega$
into polygons in 2D or polyhedra in 3D which is \emph{shape regular}. By shape regularity here we mean a partition such that
for any $T\in {\cal T}_h$, $\phi\in H^{1-\theta}(T)$, and $\theta\in [0,\frac12)$ the following trace inequality holds
\begin{equation}\label{tracein}
 \|\phi\|^2_{\partial T} \lesssim h_T^{-1}\|\phi\|_T^2+h_T^{1-2\theta} \|\phi\|_{1-\theta,T}^2, 
\end{equation}
and, in addition, when $\phi$ is a polynomial on the element $T\in {\cal T}_h$,
we also have the inverse inequality 
\begin{equation}
 \|\phi\|^2_{\partial T} \lesssim h_T^{-1}\|\phi\|_T^2.
\end{equation}
We refer the reader to \cite{wy3655} for details and
discussion of sufficient conditions on the
partition so that these inequalities hold.

Further, we denote by ${\mathcal E}_h$ the set of all edges or flat
faces in ${\cal T}_h$ and ${\mathcal E}_h^0={\mathcal E}_h \setminus
\partial\Omega$ the set of all interior edges or flat faces. Denote by
$h_T$ the meshsize of $T\in {\cal T}_h$ and $h=\max_{T\in {\cal
    T}_h}h_T$ the meshsize of ${\cal T}_h$.

Let $T\in \mathcal{T}_h$ be a polygonal or polyhedral region with
boundary $\partial T$. A weak function on $T$ refers is the triplet
$v=\{v_0,v_b, v_n\}$ such that $v_0\in L^2(T)$, $v_b\in L^{2}(\partial
T)$ and $v_n\in L^{2}(\partial T)$. The first two components of $v$,
namely $v_0$ and $v_b$, can be understood as the value of $v$ in the
interior and on the boundary of $T$, respectively. The third component
$v_n$ can be interpreted as the value of $a\nabla v \cdot \bn$ on the
boundary $\partial T$, where and in what follows of this paper $\bn$
denotes the outward normal vector on $\pT$. Note that $v_b$ and $v_n$
may not necessarily be related to the traces of $v_0$ and $a\nabla v_0
\cdot \bn$ on $\partial T$. But taking $v_b$ as the trace of $v_0$ on
$\partial T$ and/or $v_n$ as the trace of $a\nabla v_0 \cdot \bn$ on
$\partial T$ is not prohibited in the forthcoming discussion and
applications.

Let $\W(T)$ be the space defined as
\begin{equation*}\label{2.1}
\W(T)=\{v=\{v_0,v_b, v_n \}: v_0\in L^2(T), v_b\in L^{2}(\partial T), v_n\in L^{2}(\partial T)\}.
\end{equation*}
The weak gradient of $v\in \W(T)$, denoted by $\nabla_w v$, is the linear functional in $[H^1(T)]^d$ such that
\begin{equation*}
(\nabla_w v, \boldsymbol{\psi})_T=-(v_0,\nabla \cdot
  \boldsymbol{\psi})_T+\langle v_b, \boldsymbol{\psi} \cdot
  \textbf{n}\rangle_{\partial T},
\end{equation*}
for all $\boldsymbol{\psi}\in [H^1(T)]^d$.
The weak action of ${\cal L}=\nabla \cdot (a\nabla)$ on $v\in \W(T)$, denoted by  ${\cal L}_w v$, is defined as a linear functional in $H^2(T)$ such that
\begin{equation*}
({\cal L}_w v, w)_T= (v_0, {\cal L} w)_T-\langle v_b, a \nabla w\cdot
\textbf{n}\rangle_{\partial T}+ \langle v_n,w\rangle_{\partial T},
\end{equation*}
for all $w \in  H^2(T)$.

Denote by $P_r(T)$ the space of polynomials on $T$ with degree $r$ and less. A discrete version of $\nabla_{w} v$  for $v\in \W(T)$, denoted by $\nabla_{w,r,T}v$, is defined as the unique vector-valued polynomial in  $[P_r(T) ]^d$ satisfying
\begin{equation}\label{disgradient}
(\nabla_{w,r,T}  v, \boldsymbol{\psi})_T=-(v_0,\nabla \cdot
\boldsymbol{\psi})_T+\langle v_b, \boldsymbol{\psi} \cdot
\textbf{n}\rangle_{\partial T}, \quad\forall\boldsymbol{\psi}\in
[P_r(T)]^d,
\end{equation}
which, from the usual integration by parts, gives
\begin{equation}\label{disgradient*}
(\nabla_{w,r,T}  v, \boldsymbol{\psi})_T= (\nabla v_0, \boldsymbol{\psi})_T-\langle v_0- v_b, \boldsymbol{\psi} \cdot \textbf{n}\rangle_{\partial T}, \quad\forall\boldsymbol{\psi}\in [P_r(T)]^d.
\end{equation}

A discrete version of ${\cal L}_w v$ for $v\in \W(T)$, denoted by ${\cal L}_{w,r,T} v$, is defined as the unique polynomial in $P_r(T)$ satisfying
\begin{equation}\label{disvergence}
({\cal L}_w v, w)_T= (v_0, {\cal L} w)_T-\langle v_b, a \nabla w\cdot
\textbf{n}\rangle_{\partial T}+ \langle v_n,w\rangle_{\partial T}, \quad\forall w \in P_r(T),
\end{equation}
which, from the usual integration by parts, gives
\begin{equation}\label{disvergence*}
({\cal L}_w v, w)_T= ({\cal L}v_0, w)_T+\langle v_0-v_b, a \nabla w\cdot
\textbf{n}\rangle_{\partial T}- \langle a\nabla v_0\cdot \bn-v_n, w \rangle_{\partial T},  \forall w \in P_r(T).
\end{equation}

\section{Primal-Dual Weak Galerkin Formulation}\label{Section:WGFEM}
For any given integer $k\geq 1$, denote by $W_k(T)$ the discrete
function space given by
$$
W_k(T)=\{\{\sigma_0,\sigma_b, \sigma_n\}:\sigma_0\in P_k(T),\sigma_b\in
P_k(e), \sigma_n \in P_{k-1}(e),e\subset \partial T\}.
$$
By patching $W_k(T)$ over all the elements $T\in {\cal T}_h$
through a common value $v_b$ on the interior interface $\E_h^0$, we arrive at a global weak finite element space $W_h$; i.e.,
$$
W_h=\big\{\{\sigma_0,\sigma_b, \sigma_n\}:\{\sigma_0,\sigma_b, \sigma_n\}|_T\in
W_k(T), \forall T\in {\cal T}_h
 \big\}.
$$
Denote by $W_h^0$ the subspace of $W_h$ with vanishing boundary value; i.e.,
$$
W_h^0=\{v\in W_h: v_b=0 \ \text{on}\ \Gamma_D, v_n=0 \ \text{on}\ \Gamma_N\}.
$$
Denote by $M_h$ the space of piecewise polynomials of degree $s$; i.e.,
$$
M_h=\{w: w|_T\in P_{s}(T),  \forall T\in {\cal T}_h\}.
$$
Here, the integer $s$ is taken as either $k-1$ or $k-2$, as appropriate. When it comes to the case of the lowest order (i.e., $k=1$), the only option is $s=0$.

For simplicity of notation and without confusion,  for any $\sigma\in
W_h$, denote by $\nabla_{w}\sigma$ the discrete weak gradient
$\nabla_{w,k-1,T}\sigma$ computed  by
(\ref{disgradient}) on each element $T$; i.e.,
$$
(\nabla_{w}\sigma)|_T= \nabla_{w, k-1, T}(\sigma|_T), \qquad \sigma \in W_h.
$$
Similarly, for any $\sigma\in W_h$, denote by ${\cal L}_{w}\sigma$ the discrete weak-${\cal L}$ operation ${\cal L}_{w, s, T}\sigma$ computed by (\ref{disvergence}) on each element $T$; i.e.,
$$
({\cal L}_{w}\sigma)|_T={\cal L}_{w, s, T}(\sigma|_T), \qquad \sigma\in W_h.
$$

For any $ \lambda, w \in W_h$, and $u\in M_h$, we introduce the following bilinear forms
\begin{eqnarray}
s(\lambda, w)&=&\sum_{T\in {\cal T}_h} s_T(\lambda, w),\label{EQ:s-form}\\
b(u, w)&=&\sum_{T\in {\cal T}_h}(u, {\cal L}_w w+\bb \cdot \nabla_w w)_T,\label{EQ:b-form}
\end{eqnarray}
where

\begin{equation}\label{EQ:sT-form}
\begin{split}
s_T(\lambda, w)=&h_T^{-3}\langle (|a|_T+|\bb \cdot \bn|) (\lambda_0-\lambda_b), w_0-w_b\rangle_{\partial T}
\\&+h_T^{-1}\langle  a \nabla \lambda_0 \cdot \bn-\lambda_n,  a \nabla w_0 \cdot \bn-w_n\rangle_{\partial T}\\
&+\gamma ({\cal L} \lambda_0+\bb \cdot \nabla \lambda_0,  {\cal L} w_0+\bb \cdot \nabla w_0)_T.\\
\end{split}
\end{equation}
Here, $\gamma \ge 0$ is a parameter independent of the meshsize $h$ and the functions involved; and $|a|_T=\sup_{\bx\in T}(\sum_{i, j=1}^d a^2_{ij}(\textbf{x}))^{\frac{1}{2}}$.

Let $k\ge 2$ be a given integer and $s\ge 0$ be another integer. Our primal-dual weak Galerkin finite element method for the convection-diffusion model problem (\ref{model}) based on the variational formulation (\ref{weakform}) can be described as follows:
\begin{algorithm} Find $(u_h;\lambda_h)\in M_h
\times W_{h}^0$ satisfying
\begin{eqnarray}\label{32}
s(\lambda_h, w)+b(u_h,w)&=&-(f,w_0)+ \langle g_2, w_b \rangle_{\Gamma_N}+\langle g_1, w_n\rangle_{\Gamma_D},\quad  \forall w\in W^0_h,\\
b(v, \lambda_h)&=& 0,  \qquad\qquad\qquad\qquad\qquad\qquad\qquad\qquad \forall v\in M_{h}.\label{2}
\end{eqnarray}
\end{algorithm}

For any $w\in H^1(\Omega)$, denote by $Q_h w$ the $L^2$ projection onto the weak finite element space $W_h$ such that on each element $T$,
$$
Q_hw=\{Q_0w,Q_bw, Q_n(a\nabla w \cdot \bn)\}.
$$
Here and in what follows of this paper, on each element $T$, $Q_0$ denotes the $L^2$ projection operator onto $P_k(T)$; on each edge or face $e\subset\partial T$, $Q_b$ and $Q_n$ stand for the $L^2$ projection operators onto $P_{k}(e)$ and $P_{k-1}(e)$, respectively. Denote by ${\cal Q}^{k-1}_h$ and ${\cal Q}^{s}_h$ the $L^2$ projection operators onto the space of piecewise vector-valued polynomials of degree $k-1$ and the space $M_h$, respectively.

\begin{lemma}\label{Lemma5.1} \cite{wy3655} The $L^2$ projection operators $Q_h$, ${\cal Q}^{k-1}_h$ and ${\cal Q}^{s}_h$ satisfy the following commutative diagram:
\begin{equation}\label{l}
\nabla_{w}(Q_h w) = {\cal Q}_h^{k-1}( \nabla w), \qquad \forall  w\in H^1(T);
\end{equation}
\begin{equation}\label{div}
{\cal L}_{w}(Q_h w) = {\cal Q}_h^{s}( {\cal L} w),   \qquad \forall w \in H^1(T), \; a\nabla w \in H(div; T).
\end{equation}
\end{lemma}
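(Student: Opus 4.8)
The plan is to verify each identity by testing against an arbitrary polynomial of the appropriate degree and reducing everything to the defining relations of the $L^2$ projections together with one (or two) integrations by parts. Both sides of \eqref{l} lie in $[P_{k-1}(T)]^d$ and both sides of \eqref{div} lie in $P_s(T)$, so in each case equality of the two polynomials is equivalent to equality of their $L^2$ inner products against every test polynomial of the same degree.

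For the gradient identity \eqref{l}, I would start from the definition \eqref{disgradient} of $\nabla_{w,k-1,T}$ applied to $Q_hw=\{Q_0w,Q_bw,Q_n(a\nabla w\cdot\bn)\}$, which gives for every $\boldsymbol{\psi}\in[P_{k-1}(T)]^d$
\[
(\nabla_w(Q_hw),\boldsymbol{\psi})_T=-(Q_0w,\nabla\cdot\boldsymbol{\psi})_T+\langle Q_bw,\boldsymbol{\psi}\cdot\bn\rangle_{\partial T}.
\]
Since $\nabla\cdot\boldsymbol{\psi}\in P_{k-2}(T)\subset P_k(T)$ and $\boldsymbol{\psi}\cdot\bn|_e\in P_{k-1}(e)\subset P_k(e)$, the defining properties of the projections $Q_0$ and $Q_b$ allow me to replace $Q_0w$ and $Q_bw$ by $w$. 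A single integration by parts then returns $(\nabla w,\boldsymbol{\psi})_T$, which by the definition of the $L^2$ projection equals $({\cal Q}_h^{k-1}(\nabla w),\boldsymbol{\psi})_T$; since $\boldsymbol{\psi}$ was arbitrary this proves \eqref{l}.

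For the identity \eqref{div} I would proceed analogously, using the definition \eqref{disvergence}: for every $\varphi\in P_s(T)$,
\[
({\cal L}_w(Q_hw),\varphi)_T=(Q_0w,{\cal L}\varphi)_T-\langle Q_bw,a\nabla\varphi\cdot\bn\rangle_{\partial T}+\langle Q_n(a\nabla w\cdot\bn),\varphi\rangle_{\partial T}.
\]
The aim is to strip off each projection and integrate by parts twice to recover $({\cal L}w,\varphi)_T=({\cal Q}_h^s({\cal L}w),\varphi)_T$. The removals rest on three degree counts: $\varphi|_e\in P_s(e)\subset P_{k-1}(e)$, so the $Q_n$-error pairs to zero against $\varphi$; and ${\cal L}\varphi\in P_k(T)$ together with $a\nabla\varphi\cdot\bn|_e\in P_k(e)$, so the $Q_0$- and $Q_b$-errors are orthogonal to ${\cal L}\varphi$ and to $a\nabla\varphi\cdot\bn$, respectively.

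The main obstacle is precisely this last pair of memberships, since they require that the coefficient $a$ map low-degree polynomials back into the projection images. With $a$ piecewise constant on ${\cal T}_h$ one has ${\cal L}\varphi=\nabla\cdot(a\nabla\varphi)\in P_{s-2}(T)\subset P_k(T)$ and $a\nabla\varphi\cdot\bn|_e\in P_{s-1}(e)\subset P_k(e)$, and the argument closes cleanly; for genuinely variable $a$ these polynomial memberships fail and the two sides of \eqref{div} would differ by projection-consistency terms. I would therefore establish \eqref{div} under the standing assumption (inherited from \cite{wy3655}) that $a$ is piecewise constant on each element, while noting that the gradient identity \eqref{l} needs no such restriction. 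Once the degree bookkeeping is in place, the remaining computation is exactly the symmetric Green's formula $({\cal L}w,\varphi)_T=(w,{\cal L}\varphi)_T-\langle w,a\nabla\varphi\cdot\bn\rangle_{\partial T}+\langle a\nabla w\cdot\bn,\varphi\rangle_{\partial T}$, which matches the stripped expression term by term.
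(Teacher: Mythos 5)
Your proof is correct and follows essentially the same route as the paper: apply the defining relations \eqref{disgradient} and \eqref{disvergence} to $Q_h w$, strip the projections using their orthogonality against the relevant polynomial test data, and integrate by parts to identify the result with the $L^2$ projection of $\nabla w$ or ${\cal L}w$. Your explicit degree bookkeeping for \eqref{div} --- in particular the observation that removing $Q_0$ and $Q_b$ requires ${\cal L}\varphi$ and $a\nabla\varphi\cdot\bn$ to land in the projection targets, which effectively forces $a$ to be piecewise constant (or suitably piecewise polynomial) on each element --- is a point the paper's proof passes over silently but does rely on, consistent with the standing assumption it adopts in the later sections.
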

\begin{proof}
For any $\boldsymbol{\psi}\in  [P_{k-1}(T)]^d$ and  $w\in H^1(T)$, from (\ref{disgradient}) and the usual integration by parts, we have
\begin{equation*}
\begin{split}
(\nabla_{w } (Q_h w), \boldsymbol{\psi})_T&=-(Q_0w,\nabla \cdot
\boldsymbol{\psi})_T+\langle Q_bw, \boldsymbol{\psi} \cdot
\textbf{n}\rangle_{\partial T}\\
&=-( w,\nabla \cdot \boldsymbol{\psi})_T+\langle w, \boldsymbol{\psi}\cdot
\textbf{n}\rangle_{\partial T}\\
&=(\nabla w, \boldsymbol{\psi})_T\\
&=({\cal Q}^{k-1}_h(\nabla w), \boldsymbol{\psi})_T,
\end{split}
\end{equation*}
which verifies (\ref{l}).

Next, for any $\phi\in P_{s}(T)$ and  $w \in H^1(T)$ such that $a\nabla w \in H(div; T)$, from (\ref{disvergence}) and the usual integration by parts, we have
\begin{equation*}
\begin{split}
({\cal L}_{w }(Q_h w), \phi)_T&= (Q_0w, {\cal L} \phi)_T-\langle Q_bw,  a\nabla\phi \cdot \bn \rangle_{\partial T} +\langle Q_n (a\nabla w\cdot \bn), \phi\rangle_{\partial T}  \\
&=(w, {\cal L} \phi)_T-\langle w,  a\nabla\phi \cdot \bn \rangle_{\partial T} +\langle  a\nabla w\cdot \bn, \phi\rangle_{\partial T} \\
&= ({\cal L} w, \phi)_T\\
&= ( {\cal Q}^s_h({\cal L} w), \phi)_T,
\end{split}
\end{equation*}
which completes the proof of (\ref{div}).
\end{proof}

\section{Solution Existence, Uniqueness, and Stability} \label{Section:ExistenceUniqueness}
The stabilizer $s(\cdot, \cdot)$ induces a semi-norm in the weak finite element space $W_h$ as follows:
\begin{equation}\label{EQ:triplebarnorm}
\3bar w \3bar = s(w, w)^{\frac{1}{2}}, \qquad \forall w\in W_h.
\end{equation}

In what follows of this paper, for the convenience of analysis, we assume that the convection vector $\bb$ and the diffusion tensor $a$ are piecewise constants associated with the finite element partition $\T_h$. However, all the analysis can be generalized to the case that the convection $\bb$ and diffusion $a$ are uniformly piecewise smooth functions without any difficulty.

\begin{lemma}\label{lem3-new} ({\it inf-sup} condition) Assume the convection tensor $\bb$ and diffusion tensor $a$ are uniformly piecewise constants with respect to the finite element partition $\T_h$. The following {\it inf-sup} condition holds true:
\begin{eqnarray} \label{EQ:inf-sup-condition-01}
 \sup_{ \lambda \in W_h^0} \frac{b(v, \lambda)}{\3bar \lambda \3bar}  & \geq & \beta_0  h^{\epsilon}  \|v\|_{\epsilon}, \quad \forall v \in M_{h},
\end{eqnarray}
where $\beta_0>0$ is a constant independent of the meshsize $h$.
\end{lemma}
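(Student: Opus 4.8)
The plan is to verify the inf-sup inequality by exhibiting, for each $v\in M_h$, an explicit test function $\lambda\in W_h^0$ for which the ratio $b(v,\lambda)/\3bar\lambda\3bar$ is bounded below by $\beta_0 h^{\epsilon}\|v\|_{\epsilon}$. The construction couples the dual problem \eqref{EQ:dual-form} with its regularity bound \eqref{EQ:H2Regularity1}. Since $\epsilon<\frac12$, every $v\in M_h$ lies in $H^{\epsilon}(\Omega)$, so by the duality characterization of the $H^{\epsilon}$-norm there is $\psi\in H^{-\epsilon}(\Omega)$ with $\|\psi\|_{-\epsilon}=1$ and $(\psi,v)=\|v\|_{\epsilon}$. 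For this $\psi$ I solve \eqref{EQ:dual-form} to obtain $\tilde\lambda\in W$ with $\tilde\lambda|_{\Gamma_D}=0$, $a\nabla\tilde\lambda\cdot\bn|_{\Gamma_N}=0$, and $\|\tilde\lambda\|_{2-\epsilon}\lesssim\|\psi\|_{-\epsilon}=1$. The candidate test function is $\lambda:=Q_h\tilde\lambda$; since $Q_b$ and $Q_n$ annihilate the vanishing boundary data, $Q_h\tilde\lambda\in W_h^0$.

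The second step identifies the numerator. Applying the definition \eqref{disvergence} of $\mathcal{L}_w$ with the polynomial $v|_T\in P_s(T)$, using that the projections $Q_0,Q_b,Q_n$ may be dropped against the lower-degree polynomials $\mathcal{L}v$, $a\nabla v\cdot\bn$ and $v$ (valid since $s\le k-1$), and invoking the commutative diagram \eqref{l} for the weak gradient, an element-wise integration by parts gives
\begin{equation*}
b(v, Q_h\tilde\lambda) = \sum_{T\in\T_h}\Big[-(a\nabla\tilde\lambda, \nabla v)_T + \langle a\nabla\tilde\lambda\cdot\bn, v\rangle_{\partial T} + (\bb\cdot\nabla\tilde\lambda, v)_T\Big].
\end{equation*}
I claim the right-hand side equals the pairing $\langle \mathcal{L}\tilde\lambda+\bb\cdot\nabla\tilde\lambda, v\rangle=(\psi,v)=\|v\|_{\epsilon}$ supplied by \eqref{EQ:dual-form}. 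For $\epsilon=0$ this follows at once from the commutative diagram \eqref{div}; for $\epsilon\in(0,\frac12)$, where $\mathcal{L}\tilde\lambda$ lives only in $H^{-\epsilon}$, I would justify it by a density argument, checking the element-wise Green identity for smooth $\tilde\lambda$ and passing to the limit via continuity of both sides in $\|\tilde\lambda\|_{2-\epsilon}$. In either case $b(v,Q_h\tilde\lambda)=\|v\|_{\epsilon}$.

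The third and main step is the bound $\3bar Q_h\tilde\lambda\3bar\lesssim h^{-\epsilon}\|\tilde\lambda\|_{2-\epsilon}$, whence, by \eqref{EQ:H2Regularity1}, $\3bar Q_h\tilde\lambda\3bar\lesssim h^{-\epsilon}$. I would estimate the three contributions to $s_T(Q_h\tilde\lambda,Q_h\tilde\lambda)$ separately. For the first, writing $Q_0\tilde\lambda-Q_b\tilde\lambda=(Q_0\tilde\lambda-\tilde\lambda)-(Q_b\tilde\lambda-\tilde\lambda)$ and combining the trace inequality \eqref{tracein} with $\theta=\epsilon$ and the standard $L^2$-projection error estimates yields $\|Q_0\tilde\lambda-Q_b\tilde\lambda\|_{\partial T}^2\lesssim h_T^{3-2\epsilon}\|\tilde\lambda\|_{2-\epsilon,T}^2$, so the $h_T^{-3}$ weight produces $h_T^{-2\epsilon}\|\tilde\lambda\|_{2-\epsilon,T}^2$. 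The second term is handled identically, splitting $a\nabla(Q_0\tilde\lambda)\cdot\bn-Q_n(a\nabla\tilde\lambda\cdot\bn)$ into a gradient projection error (controlled by \eqref{tracein} applied to $\nabla(Q_0\tilde\lambda-\tilde\lambda)$) and the boundary projection error $(I-Q_n)(a\nabla\tilde\lambda\cdot\bn)$; both are $O(h_T^{1-2\epsilon})$ in $L^2(\partial T)$, and the $h_T^{-1}$ weight again gives $h_T^{-2\epsilon}$. The $\gamma$-term is where $\epsilon>0$ bites: since $\mathcal{L}\tilde\lambda\notin L^2$ one cannot compare $\mathcal{L}(Q_0\tilde\lambda)$ to $\mathcal{L}\tilde\lambda$ directly, so I would instead use the fractional inverse inequality $|Q_0\tilde\lambda|_{2,T}\lesssim h_T^{-\epsilon}\|Q_0\tilde\lambda\|_{2-\epsilon,T}\lesssim h_T^{-\epsilon}\|\tilde\lambda\|_{2-\epsilon,T}$ (with $H^{2-\epsilon}$-stability of $Q_0$) to bound $\|\mathcal{L}(Q_0\tilde\lambda)+\bb\cdot\nabla(Q_0\tilde\lambda)\|_T\lesssim h_T^{-\epsilon}\|\tilde\lambda\|_{2-\epsilon,T}$. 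Summing over $T$ gives $\3bar Q_h\tilde\lambda\3bar\lesssim h^{-\epsilon}\|\tilde\lambda\|_{2-\epsilon}$.

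Combining the three steps,
\begin{equation*}
\sup_{\lambda\in W_h^0}\frac{b(v,\lambda)}{\3bar\lambda\3bar}\ \ge\ \frac{b(v,Q_h\tilde\lambda)}{\3bar Q_h\tilde\lambda\3bar}\ \gtrsim\ \frac{\|v\|_{\epsilon}}{h^{-\epsilon}}\ =\ \beta_0\,h^{\epsilon}\|v\|_{\epsilon},
\end{equation*}
which is the assertion. I expect the main obstacle to be the third step: propagating the correct negative power $h^{-\epsilon}$ through the stabilizer under only $H^{2-\epsilon}$ regularity, which forces the sharp trace inequality \eqref{tracein} with the tuned parameter $\theta=\epsilon$ together with a fractional inverse estimate for the $\gamma$-term. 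A secondary point requiring care is the rigorous identification of the numerator with $(\psi,v)$ when $\epsilon>0$, where $\mathcal{L}\tilde\lambda$ is only an $H^{-\epsilon}$ distribution.
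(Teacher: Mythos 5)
Your proposal is correct and follows essentially the same route as the paper's proof: solve the dual problem \eqref{EQ:dual-form} for the given data, take $Q_h$ of its solution as the test function in $W_h^0$, identify $b(v,Q_h w)=(v,\psi)$ via the commutative diagram of Lemma \ref{Lemma5.1}, and bound $\3bar Q_h w\3bar\lesssim h^{-\epsilon}\|\psi\|_{-\epsilon}$ term by term in the stabilizer using the trace inequality \eqref{tracein} and projection error estimates. The only differences are cosmetic (you fix a norming $\psi$ rather than taking the supremum over $\psi$, and you use $\theta=\epsilon$ where the paper uses $\theta=0$ for the first stabilizer term), and your added care about pairing $\mathcal{L}\tilde\lambda\in H^{-\epsilon}$ with $v$ is a point the paper passes over silently.
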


\begin{proof} For any $\psi\in H^{-\epsilon}(\Omega)$,
  let $w\in H^{2-\epsilon}(\Omega)$ be the solution to the dual
  problem~\eqref{EQ:dual-form} satisfying the regularity
  estimate~\eqref{EQ:H2Regularity1}
%% \begin{eqnarray}\label{pro-new}
%% {\cal L}w_\psi +\bb \cdot \nabla w_\psi& = & \psi, \qquad \text{in}\ \Omega,\\
%% w_\psi & = & 0, \qquad\text{on} \ \Gamma_D,\label{pro0-new}\\
%% a\nabla w_\psi \cdot \bn & = & 0, \qquad\text{on} \ \Gamma_N.\label{pro0-N}
%% \end{eqnarray}
%% We assume that the solution $w:=w_\psi$ of the auxiliary problem (\ref{pro-new})-(\ref{pro0-N}) has the following $H^{2-\epsilon}$-regularity estimate
%% \begin{equation}
%%  \|w\|_{2-\epsilon} \lesssim   \|\psi\|_{-\epsilon},
%% \end{equation}
%% where $0 \leq \epsilon < \frac{1}{2}$.
By letting $\rho=Q_hw \in W_{h}^0$, from the trace inequality (\ref{tracein}) with $\theta=0$, and the estimate (\ref{error1}), we arrive at
\begin{equation}\label{EQ:Estimate:002}
\begin{split}
&\sum_{T\in {\cal T}_h }h_T^{-3}\int_{\partial
T}(|a|_T+|\bb \cdot \bn|)(\rho_0-\rho_b)^2ds\\
\lesssim &\sum_{T\in {\cal T}_h}
h_T^{-3}\int_{\partial T}|Q_0 w-Q_b w|^2ds\\
\lesssim &\sum_{T\in {\cal T}_h} h_T^{-3 }\int_{\partial T}|Q_0w-w|^2ds\\
\lesssim & \sum_{T\in {\cal T}_h} h_T^{-4}\int_{T}|Q_0w-w|^2dT
+h_T^{-2}\int_{T}|\nabla Q_0w-\nabla w|^2dT\\
\lesssim & h^{-2\epsilon}\|w\|_{2-\epsilon}^2.
 \end{split}
\end{equation} 
Analogously, we have from the trace inequality \eqref{tracein} with $\theta=\epsilon$ that
 \begin{equation}\label{EQ:Estimate:003}
\begin{split}
&\sum_{T\in {\cal T}_h }h_T^{-1}\int_{\partial T}|a\nabla
\rho_0\cdot\bn-\rho_n|^2ds\\
\lesssim & \sum_{T\in {\cal T}_h }h_T^{-1}\int_{\partial T}|a\nabla Q_0w\cdot\bn-Q_n(a\nabla w\cdot\bn)|^2ds\\
\lesssim & \sum_{T\in {\cal T}_h }h_T^{-1}\int_{\partial T}|a\nabla Q_0w\cdot\bn- a\nabla w\cdot\bn|^2ds\\
\lesssim & \sum_{T\in {\cal T}_h }h_T^{-2}\|a\nabla
Q_0w\cdot\bn- a\nabla w\cdot\bn\|_T^2+h_T^{-2\epsilon}\|a\nabla
Q_0w\cdot\bn- a\nabla w\cdot\bn\|_{1-\epsilon, T}^2\\
\lesssim & h^{-2\epsilon}\|w\|_{2-\epsilon}^2.
\end{split}
\end{equation}
The usual inverse inequality can be employed to give
\begin{equation}\label{EQ:Estimate:004}
\begin{split}
 \sum_{T\in {\cal T}_h} \gamma  ({\cal L}\rho_0+\bb \cdot \nabla \rho_0,
{\cal L}\rho_0+\bb \cdot \nabla \rho_0)_T  \lesssim & \gamma \sum_{T\in {\cal T}_h } h^{-2\epsilon}_T\|\rho_0\|^2_{2-\epsilon, T}\\
 \lesssim & \gamma \sum_{T\in {\cal T}_h }h^{-2\epsilon}_T \|Q_0 w\|^2_{2-\epsilon, T}\\
 \lesssim & \gamma h^{-2\epsilon} \|w\|^2_{2-\epsilon}.\\
 \end{split}
\end{equation}
By combining the estimates (\ref{EQ:Estimate:002})-(\ref{EQ:Estimate:004}), the $H^{2-\epsilon}$-regularity estimate (\ref{EQ:H2Regularity1}), and the definition of $\3bar \rho \3bar$, we arrive at
\begin{equation}\label{EQ:inf-sup-condition-02}
\3bar \rho\3bar \lesssim h^{-\epsilon} \|\psi\|_{-\epsilon}.
\end{equation}

Now, by letting $\rho=Q_hw \in W_h^0$, and then using Lemma \ref{Lemma5.1} we obtain
\begin{equation}\label{EQ:April05:100}
\begin{split}
b(v, \rho) = & \sum_{T\in {\cal T}_h}(v, {\cal L}_w (Q_hw)+\bb \cdot \nabla_w (Q_hw))_T  \\
 =&\sum_{T\in {\cal T}_h}(v, {\cal Q}^s_h ({\cal L} w))_T+(\bb v, {\cal Q}_h^{k-1}(\nabla w))_T
 \\
=&\sum_{T\in {\cal T}_h}(v,  {\cal L} w)_T+(\bb v,  \nabla w)_T
\\
=&\sum_{T\in {\cal T}_h}(v,  {\cal L} w + \bb \cdot \nabla w)_T
\\
=&(v, \psi).
\end{split}
\end{equation}
Using (\ref{EQ:April05:100}) and (\ref{EQ:inf-sup-condition-02}) gives
 
\begin{equation*}\label{termone}
\begin{split}
\sup_{ \lambda \in W_h^0} \frac{b(v, \lambda)}{\3bar \lambda \3bar} &\geq
\sup_{ \rho=Q_h w \in W_h^0} \frac{b(v, \rho)}{\3bar \rho \3bar} \\
& = \sup_{\psi\in H^{-\epsilon}(\Omega)} \frac{ (v, \psi)}{\3bar \rho(\psi)\3bar} \\
&\geq \beta_0 \sup_{\psi\in H^{-\epsilon}(\Omega)} \frac{(v, \psi) }{h^{-\epsilon}\|\psi\|_{-\epsilon} }\\&=\beta_0 h^{\epsilon}  \|v\|_{\epsilon}
\end{split}
\end{equation*}
for a constant $\beta_0$ independent of the meshsize $h$. This completes the proof of the lemma.
\end{proof}

\begin{remark}
If the stabilizer $s_T(\lambda, w)$ defined in (\ref{EQ:sT-form}) is chosen depending on the regularity as follows 
\begin{equation*}\label{EQ:sT-form2}
\begin{split}
s_T(\lambda, w)=&h_T^{-3+2\epsilon}\langle (|a|_T+|\bb \cdot \bn|) (\lambda_0-\lambda_b), w_0-w_b\rangle_{\partial T}
\\&+h_T^{-1+2\epsilon}\langle  a \nabla \lambda_0 \cdot \bn-\lambda_n,  a \nabla w_0 \cdot \bn-w_n\rangle_{\partial T}\\
&+\gamma h_T^{2\epsilon}({\cal L} \lambda_0+\bb \cdot \nabla \lambda_0,  {\cal L} w_0+\bb \cdot \nabla w_0)_T,\\
\end{split}
\end{equation*}
 then the inf-sup condition (\ref{EQ:inf-sup-condition-01}) is independent of $h$.
\end{remark}

The following theorem is concerned with the main result on solution existence and uniqueness for the primal-dual weak Galerkin scheme \eqref{32}-\eqref{2}.

\begin{theorem}\label{thmunique1} Assume that the diffusion tensor $a=a(\bx)$ and the convection vector $\bb$ are piecewise constants with respect to the finite element partition $\T_h$. Under the $H^{2-\epsilon}  (0 \leq \epsilon < \frac{1}{2})$-regularity assumption (\ref{EQ:H2Regularity1}), the primal-dual weak Galerkin finite element algorithm (\ref{32})-(\ref{2}) has one and only one solution for any $k\ge 2$ and $s=k-2$ or $s=k-1$ when $\gamma>0$. For the case of $\gamma=0$ (i.e., no residual stability), the numerical scheme (\ref{32})-(\ref{2}) has one and only one solution for any $k\ge 2$ and $s=k-1$.
\end{theorem}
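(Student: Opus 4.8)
The plan is to exploit the fact that \eqref{32}--\eqref{2} is a \emph{square} linear system on the finite dimensional space $M_h\times W_h^0$, so that existence is equivalent to uniqueness and it suffices to show that the homogeneous problem (with $f=0$ and $g_1=g_2=0$) admits only $(u_h;\lambda_h)=(0;0)$. First I would test the homogeneous version of \eqref{32} with $w=\lambda_h$ to obtain $s(\lambda_h,\lambda_h)+b(u_h,\lambda_h)=0$, and the homogeneous version of \eqref{2} with $v=u_h\in M_h$ to obtain $b(u_h,\lambda_h)=0$; combining the two yields $\3bar \lambda_h\3bar^2=s(\lambda_h,\lambda_h)=0$. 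Since each term of $s_T$ in \eqref{EQ:sT-form} is a weighted square and the weight $|a|_T+|\bb\cdot\bn|$ is bounded below by $\alpha>0$, this collapse forces the pointwise relations $\lambda_0=\lambda_b$ and $a\nabla\lambda_0\cdot\bn=\lambda_n$ on each $\pT$, and, \emph{when $\gamma>0$}, the elementwise residual identity ${\cal L}\lambda_0+\bb\cdot\nabla\lambda_0=0$ on every $T\in\T_h$.

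The component $u_h$ is then eliminated cheaply. Because $\3bar\lambda_h\3bar=0$ and $s$ is positive semi-definite, the Cauchy--Schwarz inequality gives $s(\lambda_h,w)=0$ for all $w\in W_h^0$, so the homogeneous \eqref{32} reduces to $b(u_h,w)=0$ for all $w\in W_h^0$. Inserting this into the inf-sup condition of Lemma~\ref{lem3-new} with $v=u_h$ produces $\beta_0 h^{\epsilon}\|u_h\|_{\epsilon}\le 0$, hence $u_h=0$.

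It remains to prove $\lambda_h=0$, and here I would first secure the residual identity ${\cal L}\lambda_0+\bb\cdot\nabla\lambda_0=0$ in both admissible regimes. For $\gamma>0$ it is already in hand. For $\gamma=0$ with $s=k-1$, I would substitute the relations $\lambda_0=\lambda_b$ and $a\nabla\lambda_0\cdot\bn=\lambda_n$ into $b(v,\lambda_h)$: by \eqref{disvergence*} and \eqref{disgradient*} all interface contributions cancel, leaving $b(v,\lambda_h)=\sum_{T}(v,\,{\cal L}\lambda_0+\bb\cdot\nabla\lambda_0)_T$. Since the residual has degree at most $k-1$ and $M_h=P_{k-1}$, choosing $v$ equal to it elementwise and invoking \eqref{2} forces the residual to vanish; this is precisely where $s=k-1$ rather than $s=k-2$ is required once the residual stabilizer is turned off. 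With the residual identity available, $\lambda_0=\lambda_b$ being single-valued across $\E_h^0$ makes $\lambda_0$ continuous, so $\lambda_0\in H^1(\Omega)$ with $\lambda_0|_{\Gamma_D}=0$, while $a\nabla\lambda_0\cdot\bn=\lambda_n=0$ on $\Gamma_N$ supplies the homogeneous Neumann condition. I would then identify $\lambda_0$ as a solution of the dual problem \eqref{EQ:dual-form} with $\psi\equiv0$ and invoke the uniqueness furnished by the regularity assumption \eqref{EQ:H2Regularity1} to conclude $\lambda_0\equiv0$; then $\lambda_b=\lambda_0=0$ and $\lambda_n=a\nabla\lambda_0\cdot\bn=0$, i.e. $\lambda_h=0$.

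The hard part will be the final identification. The stabilizer only delivers the \emph{elementwise} flux matching $a\nabla\lambda_0\cdot\bn=\lambda_n$, and $\lambda_n$ is double-valued on interior faces, so nothing in the relations obtained so far manifestly forces the normal flux $a\nabla\lambda_0\cdot\bn$ to be continuous across $\E_h^0$; yet membership of $\lambda_0$ in the space $W$ on which the dual uniqueness is stated demands exactly $a\nabla\lambda_0\in H(\mathrm{div};\Omega)$. The crux is therefore to show that the elementwise residual identity, together with the continuity of $\lambda_0$ and the boundary conditions, excludes inter-element flux jumps—equivalently, that the distributional residual of $\lambda_0$ vanishes as an element of $H^{-\epsilon}(\Omega)$, noting that nonzero jumps would produce interface Dirac layers lying outside $H^{-\epsilon}$ for $\epsilon<\frac12$—so that $\lambda_0$ is an admissible solution of \eqref{EQ:dual-form} to which the assumed uniqueness applies. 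This is the step where the convection structure of ${\cal L}+\bb\cdot\nabla$ and the low-regularity threshold $\epsilon<\frac12$ genuinely enter, and it is the part I expect to require the most care.
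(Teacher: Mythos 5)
Your argument tracks the paper's proof almost line for line: reduce to the homogeneous system, test with $w=\lambda_h$ and $v=u_h$ to get $s(\lambda_h,\lambda_h)=0$, extract the elementwise relations $\lambda_0=\lambda_b$ and $a\nabla\lambda_0\cdot\bn=\lambda_n$, obtain ${\cal L}\lambda_0+\bb\cdot\nabla\lambda_0=0$ on each $T$ either from $\gamma>0$ or, when $\gamma=0$ and $s=k-1$, by taking $v$ equal to the (degree $k-1$) residual in $b(v,\lambda_h)=0$, and kill $u_h$ with the inf-sup condition of Lemma~\ref{lem3-new}. Your reordering --- eliminating $u_h$ first by observing that $s(\lambda_h,\cdot)\equiv 0$ follows from Cauchy--Schwarz --- is a harmless, in fact slightly cleaner, variant of the paper, which establishes $\lambda_h\equiv 0$ first and only then reads $b(u_h,w)=0$ off \eqref{32}.

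The one step you leave open is precisely the one the paper does not spell out either: it simply asserts that the elementwise residual identity together with $\lambda_0|_{\Gamma_D}=0$ and $a\nabla\lambda_0\cdot\bn|_{\Gamma_N}=0$ forces $\lambda_0\equiv 0$. You are right that this requires inter-element continuity of the normal flux, and the worry is genuine: if $\lambda_n$ were an independent polynomial on each side of an interior face, the homogeneous system would have a nontrivial kernel (take $a=I$, $\bb=0$, $\Gamma_D=\partial\Omega$, let $\lambda_0$ be a continuous piecewise linear hat function, $\lambda_b=\lambda_0$, $\lambda_n=\nabla\lambda_0\cdot\bn$ elementwise; every stabilizer term and every $b(v,\lambda_h)$ then vanishes). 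The resolution is not additional PDE analysis but the intended definition of $W_h$: $\lambda_n$ is a single-valued interface unknown carrying an orientation, so that on $e=\partial T_1\cap\partial T_2$ the two traces satisfy $\lambda_n|_{\partial T_1}=-\lambda_n|_{\partial T_2}$ relative to the respective outward normals. With that reading, $a\nabla\lambda_0\cdot\bn=\lambda_n$ from both sides gives a vanishing jump of the normal flux on every $e\in\E_h^0$, hence $a\nabla\lambda_0\in H(\mathrm{div};\Omega)$ and $\lambda_0\in W$ solves the homogeneous dual problem \eqref{EQ:dual-form}, whose uniqueness is supplied by \eqref{EQ:H2Regularity1}. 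By contrast, the route you sketch for closing the gap --- that interface Dirac layers would lie outside $H^{-\epsilon}(\Omega)$ for $\epsilon<\tfrac12$ --- is circular as stated: to invoke it you would first need to know that the distributional residual of $\lambda_0$ belongs to $H^{-\epsilon}(\Omega)$, which is exactly equivalent to the flux jumps vanishing.
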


\begin{proof} It suffices to show that zero is the only solution to the problem (\ref{32})-(\ref{2}) with homogeneous data $f=0$, $g_1=0$ and $g_2=0$. To this end, assume $f=0$, $g_1=0$ and $g_2=0$ in (\ref{32})-(\ref{2}). By letting $v=u_h$ and $w=\lambda_h$, the difference of (\ref{2}) and (\ref{32}) gives $s(\lambda_h,\lambda_h)=0$, which implies $\lambda_0=\lambda_b$ and $a\nabla \lambda_0 \cdot \bn=\lambda_n$ on each $\partial T$. This, together with the fact that $\lambda_h\in W_h^0$, leads to $\lambda_0=0$ on $\Gamma_D$ and $a\nabla \lambda_0 \cdot \bn=0$ on $\Gamma_N$.

Next, it follows from (\ref{2}), (\ref{disgradient*}), (\ref{disvergence*}) and the usual integration by parts that for all $v\in M_h$
\begin{equation}
\begin{split}
0=&b(v, \lambda_h)\\
=&\sum_{T\in {\cal T}_h}(v,  {\cal L}_w \lambda_h+\bb\cdot \nabla_w \lambda_h)_T \\
=&\sum_{T\in {\cal T}_h}  ({\cal L}\lambda_0, v)_T+\langle \lambda_0- \lambda_b, a \nabla v\cdot
\textbf{n}\rangle_{\partial T}- \langle a\nabla \lambda_0\cdot \bn-\lambda_n, v \rangle_{\partial T}
\\
&+(\nabla \lambda_0, \bb v)-\langle \lambda_0-\lambda_b, \bb v\cdot \bn \rangle_{\partial T}\\
 =&\sum_{T\in {\cal T}_h}  ( {\cal L}\lambda_0+\bb \cdot \nabla \lambda_0, v)_T+\langle \lambda_0- \lambda_b, (a \nabla v-\bb v)\cdot
\textbf{n}\rangle_{\partial T}\\
&- \langle a\nabla \lambda_0\cdot \bn-\lambda_n, v \rangle_{\partial T}\\
 =&\sum_{T\in {\cal T}_h}  ( {\cal L}\lambda_0+\bb \cdot \nabla \lambda_0, v)_T,
\end{split}
\end{equation}
where we have used $\lambda_0=\lambda_b$ and $a\nabla \lambda_0 \cdot \bn=\lambda_n$ on each $\partial T$. This implies ${\cal L}\lambda_0+\bb \cdot \nabla \lambda_0=0$ on each $T\in{\cal T}_h$ by taking $v= {\cal L}\lambda_0+\bb \cdot \nabla \lambda_0$ if $s=k-1$. For the case of $s=k-2$, from $\gamma>0$ and the fact that $s(\lambda_h,\lambda_h)=0$ we have ${\cal L}\lambda_0+\bb \cdot \nabla \lambda_0=0$ on each element $T\in\T_h$. Since $\lambda_0=0$ on $\Gamma_D$ and $a\nabla \lambda_0 \cdot \bn=0$ on $\Gamma_N$, we then have $\lambda_0\equiv 0$ in $\Omega$. It follows that $\lambda_h\equiv0$, as $\lambda_b=\lambda_0$ and $\lambda_n=a\nabla \lambda_0 \cdot \bn$ on each $\partial T$.

To show that $u_h\equiv 0$, we use $\lambda_h\equiv 0$ and the equation (\ref{32}) to obtain
\begin{equation}\label{rew}
b(u_h, w) = 0,\qquad \forall w\in W_{h}^0.
\end{equation}
From Lemma \ref{lem3-new}, we have
$$
\sup_{ w\in W_h^0}\frac{b(u_h, w)}{\3bar w \3bar} \ge \beta_0 h^{\epsilon} \|u_h\|_{\epsilon} ,
$$
which, combined with (\ref{rew}), gives $u_h\equiv 0$ in $\Omega$. This completes the proof of the theorem.
\end{proof}

\section{Error Equations}\label{Section:error-equations}
Let $u$ and $(u_h, \lambda_h) \in  M_{h} \times W_{h}^0$ be the solution of (\ref{weakform}) and its discretization scheme (\ref{32})-(\ref{2}), respectively. Note that $\lambda_h$ approximates the trivial function $\lambda=0$ as the Lagrange multiplier.

\begin{lemma}\label{Lemma:LocalEQ} Assume that the confusion tensor $a=a(\bx)$ and the convection vector $\bb$ are piecewise constant functions in $\Omega$ with respect to the finite element partition ${\cal T}_h$. For any $\sigma\in W_{h}$ and $v\in M_{h}$, the following identity holds true:
\begin{equation}
({\cal L}_w \sigma+\bb \cdot \nabla_w \sigma, v)_T = ({\cal L} \sigma_0+\bb \cdot \nabla \sigma_0, v)_T +R_T(\sigma, v),
\end{equation}
where
\begin{equation}\label{EQ:April:06:100}
\begin{split}
R_T(\sigma,v) = \langle \sigma_0-\sigma_b, (a \nabla  v-\bb v)\cdot
\textbf{n}\rangle_{\partial T}-\langle a\nabla \sigma_0\cdot \bn-\sigma_n,    v \rangle_{\partial T}.
\end{split}
\end{equation}
\end{lemma}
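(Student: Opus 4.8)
The plan is to reduce the claimed identity directly to the two integration-by-parts formulas already recorded for the discrete weak operators: the expression (\ref{disvergence*}) for ${\cal L}_{w,s,T}$ and the expression (\ref{disgradient*}) for $\nabla_{w,k-1,T}$. The whole argument is a local computation on a single element $T$, so there is no stability or regularity analysis involved; the only thing to check is that every function we plug in lives in the polynomial space for which those identities are valid. Since $v\in M_h$ satisfies $v|_T\in P_s(T)$, it is an admissible test function in (\ref{disvergence*}). First, then, I would write
\[
({\cal L}_w \sigma, v)_T = ({\cal L}\sigma_0, v)_T + \langle \sigma_0-\sigma_b,\, a\nabla v\cdot\bn\rangle_{\partial T} - \langle a\nabla\sigma_0\cdot\bn-\sigma_n,\, v\rangle_{\partial T}.
\]

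Next I would treat the convection term by moving $\bb$ onto the test slot of the weak gradient. Writing $(\bb\cdot\nabla_w\sigma, v)_T = (\nabla_w\sigma,\, \bb v)_T$, I observe that because $\bb$ is piecewise constant on $\T_h$ and $\deg v = s\le k-1$, the vector field $\bb v$ belongs to $[P_{k-1}(T)]^d$ and is therefore an admissible test function $\boldsymbol{\psi}$ in (\ref{disgradient*}). Applying that identity and using $(\nabla\sigma_0,\bb v)_T=(\bb\cdot\nabla\sigma_0,v)_T$ gives
\[
(\bb\cdot\nabla_w\sigma, v)_T = (\bb\cdot\nabla\sigma_0, v)_T - \langle \sigma_0-\sigma_b,\, \bb v\cdot\bn\rangle_{\partial T}.
\]

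Finally I would add the two displays. The interior contributions merge into $({\cal L}\sigma_0+\bb\cdot\nabla\sigma_0, v)_T$; the two boundary terms carrying $\sigma_0-\sigma_b$ combine into the single term $\langle \sigma_0-\sigma_b,\, (a\nabla v-\bb v)\cdot\bn\rangle_{\partial T}$; and the remaining term is $-\langle a\nabla\sigma_0\cdot\bn-\sigma_n,\, v\rangle_{\partial T}$. Together these are precisely $R_T(\sigma,v)$ as defined in (\ref{EQ:April:06:100}), which proves the lemma.

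I do not expect a genuine obstacle here, since the identity is essentially an algebraic rearrangement of (\ref{disvergence*}) and (\ref{disgradient*}). The one point that requires care — and the only place the hypotheses are actually used — is the degree bookkeeping that legitimizes taking $\boldsymbol{\psi}=\bb v$ in the discrete weak-gradient identity: this is where the piecewise-constant assumption on $\bb$ and the constraint $s\le k-1$ (guaranteed by $s\in\{k-1,k-2\}$) enter. If $\bb$ were merely piecewise smooth, $\bb v$ would no longer be a polynomial of the admissible degree, and one would have to insert an $L^2$-projection of $\bb v$ onto $[P_{k-1}(T)]^d$ and carry an extra consistency term; the remark following the inf-sup lemma indicates such a generalization is routine, but it would slightly complicate this single step.
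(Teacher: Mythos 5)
Your proposal is correct and follows essentially the same route as the paper: both arguments split $({\cal L}_w\sigma+\bb\cdot\nabla_w\sigma,v)_T$ into $({\cal L}_w\sigma,v)_T+(\nabla_w\sigma,\bb v)_T$ and then apply the integration-by-parts identities (\ref{disvergence*}) and (\ref{disgradient*}) with test functions $v$ and $\bb v$, respectively. Your explicit degree bookkeeping (that $\bb v\in[P_{k-1}(T)]^d$ precisely because $\bb$ is piecewise constant and $s\le k-1$) is a point the paper uses implicitly, so this is a welcome clarification rather than a deviation.
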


\begin{proof}
From (\ref{disgradient*}) and (\ref{disvergence*}), we have
\begin{equation*}\label{EQ:April:06:200}
\begin{split}
 &({\cal L}_w \sigma+\bb \cdot \nabla_w \sigma, v)_T\\
 =&(\nabla_w\sigma, \bb v)_T+({\cal L}_w \sigma,  v)_T
\\
=& (\nabla \sigma_0, \bb v)-\langle \sigma_0-\sigma_b, \bb v\cdot \bn\rangle_{\partial T}+({\cal L}\sigma_0,   v)_T\\&
+\langle \sigma_0-\sigma_b, a \nabla   v\cdot
\textbf{n}\rangle_{\partial T}-\langle a\nabla \sigma_0\cdot \bn-\sigma_n,  v \rangle_{\partial T}\\
= & ({\cal L} \sigma_0+\bb \cdot\nabla \sigma_0, v)_T +R_T(\sigma,v),
\end{split}
\end{equation*}
where $R_T(\sigma,v)$ is given by (\ref{EQ:April:06:100}).
\end{proof}

By error functions we mean the difference between the numerical solution arising from (\ref{32})-(\ref{2}) and the $L^2$ projection of the exact solution of (\ref{weakform}); i.e.,
\begin{align}\label{error}
e_h&=u_h-{\cal Q}^{s}_hu,\\
 \varepsilon_h&=\lambda_h-Q_h\lambda=\lambda_h.\label{error-2}
\end{align}

\begin{lemma}\label{errorequa}
Let $u$ and $(u_h;\lambda_h) \in M_{h}\times W_{h}^0$ be the solutions arising from (\ref{model}) and (\ref{32})-(\ref{2}), respectively. Assume that the diffusion tensor $a=a(\bx)$ and the convection vector $\bb$ are piecewise constant functions in $\Omega$ with respect to the finite element partition ${\cal T}_h$. Then, the error functions $e_h$ and $\varepsilon_h$ satisfy the following equations
\begin{eqnarray}\label{sehv}
 s( \varepsilon_h , w)+b(e_h, w)&=&\ell_u(w)
,\qquad \forall\ w\in W_{h}^0,\\
b(v, \varepsilon_h)&=&0,\qquad\qquad \forall v\in M_{h}, \label{sehv2}
\end{eqnarray}
where $\ell_u(w)$ is given by
\begin{equation}\label{lu}
\begin{split}
\qquad \ell_u(w) =&\sum_{T\in {\cal T}_h}   ({\cal L}w_0+\bb \cdot \nabla w_0, u-{\cal Q}_h^su)_T \\
 &+\langle w_0-w_b, (a \nabla ( u-{\cal Q}_h^su)-\bb (u-{\cal Q}_h^su))\cdot
\textbf{n}\rangle_{\partial T} \\
& -\langle a\nabla w_0\cdot \bn-w_n, u-{\cal Q}_h^su\rangle_{\partial T}.
\end{split}
\end{equation}
\end{lemma}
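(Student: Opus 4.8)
The plan is to prove both identities by substituting the definitions of the error functions $e_h$ and $\varepsilon_h$ into the scheme \eqref{32}--\eqref{2}, and then converting every term that involves the projection $\mathcal{Q}_h^s u$ back into the exact solution $u$ by means of Lemma \ref{Lemma:LocalEQ} followed by an element-wise integration by parts. The identity \eqref{sehv2} is immediate: since the exact Lagrange multiplier is $\lambda\equiv 0$ we have $\varepsilon_h=\lambda_h-Q_h\lambda=\lambda_h$, so \eqref{sehv2} is literally the second scheme equation \eqref{2} and needs nothing further.

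For \eqref{sehv} I would write $e_h=u_h-\mathcal{Q}_h^su$ and $\varepsilon_h=\lambda_h$, so that by the linearity of $b(\cdot,w)$ and the first scheme equation \eqref{32},
\[
s(\varepsilon_h,w)+b(e_h,w)=-(f,w_0)+\langle g_2,w_b\rangle_{\Gamma_N}+\langle g_1,w_n\rangle_{\Gamma_D}-b(\mathcal{Q}_h^su,w).
\]
Hence it suffices to prove the single identity
\[
\ell_u(w)+b(\mathcal{Q}_h^su,w)=-(f,w_0)+\langle g_2,w_b\rangle_{\Gamma_N}+\langle g_1,w_n\rangle_{\Gamma_D},\qquad\forall w\in W_h^0.
\]
To treat the left-hand side I would apply Lemma \ref{Lemma:LocalEQ} with $\sigma=w$ and $v=\mathcal{Q}_h^su$, giving $b(\mathcal{Q}_h^su,w)=\sum_{T}\big[(\mathcal{L}w_0+\bb\cdot\nabla w_0,\mathcal{Q}_h^su)_T+R_T(w,\mathcal{Q}_h^su)\big]$. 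Adding this to the explicit expression \eqref{lu}, the two volume terms recombine into $(\mathcal{L}w_0+\bb\cdot\nabla w_0,u)_T$, and the boundary contributions of $R_T(w,\mathcal{Q}_h^su)$ cancel the $\mathcal{Q}_h^su$ pieces of the boundary terms in $\ell_u$, so that only the exact solution $u$ survives. The claim then reduces to showing that $\sum_{T}\big[(\mathcal{L}w_0+\bb\cdot\nabla w_0,u)_T+\langle w_0-w_b,(a\nabla u-\bb u)\cdot\bn\rangle_{\partial T}-\langle a\nabla w_0\cdot\bn-w_n,u\rangle_{\partial T}\big]$ equals the data functional.

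The next step is an element-wise integration by parts on the volume term. Since $w_0\in P_k(T)$ is smooth on each element, moving $\mathcal{L}=\nabla\cdot(a\nabla\cdot)$ and $\bb\cdot\nabla$ off $w_0$ and onto $u$ produces $(w_0,\mathcal{L}u-\bb\cdot\nabla u)_T=-(f,w_0)_T$, where I use the strong form $-\mathcal{L}u+\bb\cdot\nabla u=f$ (valid since $\bb$ is piecewise constant, so $\nabla\cdot(\bb u)=\bb\cdot\nabla u$), together with the boundary terms $\langle u,a\nabla w_0\cdot\bn\rangle_{\partial T}-\langle w_0,a\nabla u\cdot\bn\rangle_{\partial T}+\langle(\bb\cdot\bn)u,w_0\rangle_{\partial T}$. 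I expect the $w_0$- and $a\nabla w_0\cdot\bn$-weighted boundary contributions to cancel exactly against those arising from the $\langle w_0-w_b,\cdot\rangle$ and $\langle a\nabla w_0\cdot\bn-w_n,\cdot\rangle$ terms, leaving precisely $\sum_{T}\big[\langle w_b,(-a\nabla u+\bb u)\cdot\bn\rangle_{\partial T}+\langle w_n,u\rangle_{\partial T}\big]$ plus the volume term $-(f,w_0)$.

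The hard part will be the global boundary bookkeeping needed to identify these two face sums with the Neumann and Dirichlet data. For the $w_b$-term I would use that the exact flux $-a\nabla u+\bb u$ lies in $H(\mathrm{div};\Omega)$, so its normal trace is single-valued across each interior face; paired with the single interface value $w_b$ shared by adjacent elements (the only component patched in $W_h$) and the opposite outward normals, the interior contributions cancel, while $w_b=0$ on $\Gamma_D$ and the Neumann condition $(-a\nabla u+\bb u)\cdot\bn=g_2$ on $\Gamma_N$ yield $\langle g_2,w_b\rangle_{\Gamma_N}$. For the $w_n$-term I would invoke the continuity of the trace of $u$ across interior faces together with $w_n=0$ on $\Gamma_N$ and the Dirichlet condition $u=g_1$ on $\Gamma_D$ to obtain $\langle g_1,w_n\rangle_{\Gamma_D}$. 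I anticipate that this last inter-element cancellation of the flux component $w_n$ is the most delicate point, since it hinges on the precise normal-orientation convention for $w_n$ in the construction of $W_h^0$ and on enough regularity of $u$ for the edge traces in \eqref{lu} to be meaningful; this is where I would concentrate the careful argument.
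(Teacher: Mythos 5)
Your proposal follows the paper's own proof essentially step for step: \eqref{sehv2} is read off directly from \eqref{2} since $\varepsilon_h=\lambda_h$, and \eqref{sehv} is reduced to evaluating $b({\cal Q}_h^s u, w)$ via Lemma \ref{Lemma:LocalEQ}, after which the surviving volume term $({\cal L}w_0+\bb\cdot\nabla w_0,u)_T$ is integrated by parts element-wise and matched with the PDE and the boundary data exactly as in \eqref{EQ:April:04:104}--\eqref{EQ:April:04:107}. The only difference is cosmetic: you spell out the interior-face cancellations behind \eqref{EQ:April:04:105}--\eqref{EQ:April:04:106} (single-valuedness of $w_b$, $H(\mathrm{div})$ continuity of the flux, trace continuity of $u$), which the paper asserts without comment.
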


\begin{proof} From (\ref{error-2}) and (\ref{2}) we have
\begin{align*}
 b(v, \varepsilon_h) = b(v, \lambda_h) = 0,\qquad \forall v\in M_{h},
 \end{align*}
which gives rise to (\ref{sehv2}).

Next, observe that $\lambda=0$. Thus, from (\ref{32}) we arrive at
\begin{equation}\label{EQ:April:04:100}
\begin{split}
 & s(\lambda_h-Q_h\lambda, w)+b(u_h-{\cal Q}^{s}_hu, w) \\
 = & s(\lambda_h, w)+b(u_h, w)- b({\cal Q}^s_hu, w) \\
  = &-(f, w_0)+ \langle g_2, w_b \rangle_{\Gamma_N}+\langle g_1, w_n\rangle_{\Gamma_D}-b({\cal Q}^s_hu, w).
\end{split}
\end{equation}
For the term $b({\cal Q}^s_hu, w)$, we use Lemma \ref{Lemma:LocalEQ} to obtain
\begin{equation}\label{EQ:April:04:103}
\begin{split}
&b({\cal Q}^s_hu, w) \\
= & \sum_{T\in {\cal T}_h} ({\cal Q}^s_hu,  {\cal L}_w w+\bb \cdot \nabla_w w)_T \\
=& \sum_{T\in {\cal T}_h} ({\cal L}w_0+\bb \cdot \nabla w_0, {\cal Q}_h^su )_T +R_T(w, {\cal Q}_h^su) \\
= & \sum_{T\in {\cal T}_h} ({\cal L}w_0+\bb \cdot \nabla w_0, u)_T + ({\cal L}w_0+\bb \cdot \nabla w_0, {\cal Q}_h^su - u)_T +R_T(w, {\cal Q}_h^su).
\end{split}
\end{equation}
From the usual integration by parts we have
\begin{equation}\label{EQ:April:04:104}
\begin{split}
 &\sum_{T\in {\cal T}_h} ({\cal L}w_0+\bb \cdot \nabla w_0, u)_T \\= &
 \sum_{T\in {\cal T}_h}  (w_0,  \nabla \cdot (a\nabla u-\bb u))_T-\langle w_0, (a\nabla u-\bb u)\cdot\bn \rangle_{\partial T}+\langle a\nabla w_0\cdot \bn, u\rangle_{\partial T}.\\
 \end{split}
\end{equation}
Since $u$ is the exact solution of (\ref{model}), $w_b=0$ on $\Gamma_D$ and $w_n=0$ on $\Gamma_N$, we have
\begin{eqnarray}\label{EQ:April:04:105}
 \sum_{T\in {\cal T}_h}\langle w_b, (a\nabla u-\bb u)\cdot\bn \rangle_{\partial T} & = &- \langle w_b, g_2 \rangle_{\Gamma_N},\\
\sum_{T\in {\cal T}_h}  \langle w_n, u\rangle_{\partial T}& = &
\langle w_n, g_1\rangle_{\Gamma_D}.\label{EQ:April:04:106}
\end{eqnarray}
Using (\ref{EQ:April:04:104}), (\ref{EQ:April:04:105}), (\ref{EQ:April:04:106}) and (\ref{model}), we arrive at
\begin{equation}\label{EQ:April:04:107}
\begin{split}
 &\sum_{T\in {\cal T}_h} ({\cal L}w_0+\bb \cdot \nabla w_0, u)_T \\
 = & -(w_0, f)-\sum_{T\in {\cal T}_h} \langle w_0-w_b, (a\nabla u-\bb u)\cdot\bn \rangle_{\partial T}+\langle a\nabla w_0\cdot \bn-w_n, u\rangle_{\partial T}\\
 & +\langle w_b, g_2 \rangle_{\Gamma_N}+\langle w_n, g_1\rangle_{\Gamma_D}.
\end{split}
\end{equation}
Substituting (\ref{EQ:April:04:107}) and (\ref{EQ:April:04:103}) into (\ref{EQ:April:04:100}) gives rise to the error equation (\ref{sehv}), which completes the proof of the lemma.
\end{proof}

\section{Error Estimates}\label{Section:Stability}
For simplicity and without loss of generality
we introduce a semi-norm $\3bar \cdot \3bar_{b}$ in the finite element space $W_h$. For any $v=\{v_0,v_b,v_n\}\in W_h$, define on each $T\in\T_h$
\begin{equation}\label{EQ:sbT-form}
\3bar v\3bar_{b,T}:=\langle (|a|_T+|\bb\cdot\bn|)(v_0-v_b), v_0-v_b\rangle_\pT^{1/2}
\end{equation}
and
\begin{equation}\label{EQ:sb-form}
\3bar v\3bar_{b}:=\left(\sum_{T\in\T_h}  h_T^{-3} \3bar v\3bar_{b,T}^2\right)^{\frac12}.
\end{equation}
It follows from \eqref{EQ:triplebarnorm}, \eqref{EQ:s-form}, and \eqref{EQ:sT-form} that the following holds true:
\begin{equation}\label{EQ:sb-form-ineq}
\3bar v\3bar_{b}\leq \3bar v\3bar,\qquad v\in W_h.
\end{equation}

\begin{lemma}\label{Lemma:8.1}
Let ${\cal T}_h$ be a finite element partition of $\Omega$ satisfying the shape regular condition described in \cite{wy3655}. For $0\leq t \leq \min(2,k)$, the following estimates hold true:
\begin{eqnarray}\label{error1}
& \sum_{T\in {\cal T}_h}h_T^{2t}\|u-Q_0u\|^2_{t,T} \lesssim
h^{2(m+1)}\|u\|^2_{m+1},&\qquad m\in [t-1,k],\ k\ge 1,\\
\label{error2}
& \sum_{T\in {\cal T}_h}h_T^{2t}\|u-{\cal Q}^{(k-1)}_hu\|^2_{t,T} \lesssim h^{2m}\|u\|^2_{m},&\qquad m\in [t, k],\ k\ge 1, \\
\label{error3} & \sum_{T\in {\cal T}_h}h_T^{2t}\|u-{\cal
Q}^{(k-2)}_hu\|^2_{t,T}  \lesssim h^{2m}\|u\|^2_{m},&\qquad m \in [t, k-1],\ k\ge 2.\label{term3}
\end{eqnarray}
\end{lemma}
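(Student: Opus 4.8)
The plan is to treat each of the three estimates as a standard application of the Bramble-Hilbert lemma combined with the scaling and approximation properties of the $L^2$ projections $Q_0$, $\CQ_h^{k-1}$, and $\CQ_h^{s}$, respecting the shape-regularity hypothesis from~\cite{wy3655}. The essential mechanism is identical in all three cases: on each element $T$ the projection reproduces polynomials up to the appropriate degree, so the approximation error equals the error of approximating $u$ by that same polynomial space, and the Bramble-Hilbert argument on the reference element (pulled back via an affine-type scaling) converts a seminorm bound into the stated power of $h_T$.

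First I would establish the elementwise building block. For $Q_0$, the $L^2$ projection onto $P_k(T)$, the key fact is that $Q_0$ fixes $P_k(T)$, hence for any $p\in P_m(T)$ with $m\le k$ one has $u-Q_0 u=(u-p)-Q_0(u-p)$; choosing $p$ to be the averaged Taylor polynomial of $u$ and invoking Bramble-Hilbert on the scaled element yields, for $0\le t\le\min(2,k)$ and $m\in[t-1,k]$,
\begin{equation*}
\|u-Q_0 u\|_{t,T}^2 \lesssim h_T^{2(m+1-t)}\|u\|_{m+1,T}^2 .
\end{equation*}
Multiplying by $h_T^{2t}$ gives the local bound $h_T^{2t}\|u-Q_0u\|_{t,T}^2\lesssim h_T^{2(m+1)}\|u\|_{m+1,T}^2$; summing over $T\in\T_h$ and bounding each $h_T\le h$ produces the global estimate~(\ref{error1}). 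The stability of $Q_0$ in the relevant seminorms (which follows from the shape-regularity and the scaling inequalities used in~\cite{wy3655}) is what justifies absorbing the $Q_0(u-p)$ term, and it is exactly at the endpoint $t=2$ that one needs the restriction $k\ge\nobreak t$, i.e. $k\ge 2$, so that $P_k(T)$ actually contains the polynomials whose second derivatives are being measured.

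For the second and third estimates the argument is the same, applied to the piecewise $L^2$ projections $\CQ_h^{k-1}$ and $\CQ_h^{k-2}$ onto $P_{k-1}(T)$ and $P_{k-2}(T)$ respectively. Since $\CQ_h^{k-1}$ reproduces $P_{k-1}(T)$, Bramble-Hilbert gives $\|u-\CQ_h^{k-1}u\|_{t,T}^2\lesssim h_T^{2(m-t)}\|u\|_{m,T}^2$ for $m\in[t,k]$ (note the index range now starts at $t$, not $t-1$, reflecting the one-lower polynomial degree), and multiplying by $h_T^{2t}$ and summing yields~(\ref{error2}); the identical reasoning with $P_{k-2}(T)$ and $m\in[t,k-1]$, $k\ge 2$, gives~(\ref{error3}). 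The only genuinely delicate point, and the step I expect to require the most care rather than the most effort, is tracking the admissible ranges of $m$ and $t$: one must ensure that the chosen interpolating polynomial has high enough degree to reproduce the terms being differentiated (forcing $m\le k$, $k-1$, or $k-2$) while still having $u\in H^{m+1}$ or $H^m$ meaningful (forcing $m\ge t-1$ or $m\ge t$), and that the trace/inverse inequalities~(\ref{tracein}) underpinning the scaling on polytopal elements remain valid under the shape-regularity assumption. With those bookkeeping constraints verified, each estimate is a routine summation of the local Bramble-Hilbert bounds.
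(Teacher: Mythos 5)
The paper states this lemma without proof, citing the shape-regularity framework of \cite{wy3655}, where these projection estimates are established by exactly the scaling/Bramble--Hilbert argument you outline. Your proposal is correct and is the standard proof: the only substantive care needed is the one you already flag, namely that the Bramble--Hilbert and inverse inequalities remain valid on shape-regular polytopal (not just simplicial) elements, which is precisely what the cited regularity assumptions guarantee.
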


\begin{theorem} \label{theoestimate}
  Let $u$ be the solution of (\ref{weakform}) and $(u_h, \lambda_h)
  \in M_{h} \times W_{h}^0$ be its numerical solution arising from
  (\ref{32})-(\ref{2}) with index $k\ge 2$ and $s=k-2$ or
  $s=k-1$. Assume that the diffusion tensor $a=a(\bx)$ and the
  convection vector $\bb$ are piecewise constant functions in $\Omega$
  with respect to the finite element partition ${\cal T}_h$ which is
  shape regular \cite{wy3655}. Furthermore, assume that the exact
  solution $u$ is sufficiently regular such that $u\in \prod_{T\in
    T_h} H^{s+1}(T)\cap H^{2-\epsilon}(T)$ and that the regularity
  estimate~\eqref{EQ:H2Regularity1} holds for the dual
  problem~\eqref{EQ:dual-form}. Then, the following error estimate holds
  true:
 \begin{equation}\label{erres}
\3bar \lambda_h \3bar+ h^{\epsilon} \|e_h\|_{\epsilon} \lesssim
\left\{
\begin{array}{lr}
h^{s+2}(|a|^{\frac12} h^{-1} + 1 +\delta_{s,k-2}\gamma^{-\frac12})\|\nabla^{s+1} u\|, &\mbox{if } s\ge 1,\\
 h^{2} (|a|^{\frac12} h^{-1} + 1+\gamma^{-\frac12})(\|\nabla u\|+h^{\frac12-\epsilon}\|u\|_{2-\epsilon}), &\mbox{if } s=0,
\end{array}
\right.
\end{equation}
where $\delta_{i,j}$ is the Kronecker delta with value $1$ when $i=j$ and $0$ otherwise.
\end{theorem}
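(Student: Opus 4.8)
The plan is to combine the error equations of Lemma~\ref{errorequa} with the inf-sup condition of Lemma~\ref{lem3-new}, thereby reducing the whole theorem to a single continuity estimate for the linear functional $\ell_u$ of \eqref{lu}. First I would bound the stabilizer seminorm $\3bar \lambda_h \3bar=\3bar \varepsilon_h \3bar$. Testing \eqref{sehv} with $w=\varepsilon_h$ and noting that $e_h\in M_h$ makes the coupling term vanish through \eqref{sehv2}, i.e. $b(e_h,\varepsilon_h)=0$, so that
\begin{equation*}
\3bar \varepsilon_h \3bar^2=s(\varepsilon_h,\varepsilon_h)=\ell_u(\varepsilon_h).
\end{equation*}
Hence $\3bar \lambda_h \3bar\le \sup_{0\neq w\in W_h^0}\ell_u(w)/\3bar w \3bar$, and everything rests on proving a bound of the form $|\ell_u(w)|\lesssim \Lambda(u)\,\3bar w \3bar$, where $\Lambda(u)$ is the right-hand side of \eqref{erres}.

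The central step is therefore to bound the three groups of terms in \eqref{lu} by the three pieces of $\3bar w \3bar$ in \eqref{EQ:sT-form}. For the interior residual $({\cal L}w_0+\bb\cdot\nabla w_0,\,u-{\cal Q}_h^su)_T$ I would split into cases. When $s=k-1$ the polynomial ${\cal L}w_0+\bb\cdot\nabla w_0$ lies in $P_{k-1}(T)=P_s(T)$, so it is $L^2(T)$-orthogonal to $u-{\cal Q}_h^su$ and the term drops out; when $s=k-2$ I would apply Cauchy-Schwarz, absorb $\|{\cal L}w_0+\bb\cdot\nabla w_0\|_T$ into $\gamma^{-1/2}\3bar w \3bar$ via the $\gamma$-part of the stabilizer, and control $\|u-{\cal Q}_h^su\|_T$ by \eqref{error3}, which is exactly what produces the $\delta_{s,k-2}\gamma^{-1/2}$ contribution. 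The two boundary terms are handled by Cauchy-Schwarz on $\partial T$ against the first two pieces of $\3bar w \3bar$: the jump $w_0-w_b$ pairs with the $(|a|_T+|\bb\cdot\bn|)$-weighted part, and the flux jump $a\nabla w_0\cdot\bn-w_n$ pairs with the $h_T^{-1}$-weighted part. Extracting the weight $(|a|_T+|\bb\cdot\bn|)^{-1/2}$ and using $\|a\nabla(u-{\cal Q}_h^su)\cdot\bn\|_{\partial T}\le |a|_T\|\nabla(u-{\cal Q}_h^su)\|_{\partial T}$ is precisely what generates the $|a|^{1/2}h^{-1}$ factor, while the convective part $\bb(u-{\cal Q}_h^su)$ contributes only at the higher order recorded by the ``$+1$''. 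Throughout, traces are converted to volume norms by \eqref{tracein} and then estimated through Lemma~\ref{Lemma:8.1}.

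Once $|\ell_u(w)|\lesssim\Lambda(u)\,\3bar w \3bar$ is established, the bound on $\|e_h\|_{\epsilon}$ follows from the inf-sup condition. From \eqref{sehv} one has $b(e_h,w)=\ell_u(w)-s(\varepsilon_h,w)$, and since $s(\cdot,\cdot)$ is symmetric and positive semidefinite, $|s(\varepsilon_h,w)|\le\3bar \varepsilon_h \3bar\,\3bar w \3bar$. Dividing by $\3bar w \3bar$, taking the supremum over $w\in W_h^0$, and invoking Lemma~\ref{lem3-new} gives
\begin{equation*}
\beta_0 h^{\epsilon}\|e_h\|_{\epsilon}\le \sup_{0\neq w\in W_h^0}\frac{b(e_h,w)}{\3bar w \3bar}\lesssim \Lambda(u)+\3bar \varepsilon_h \3bar\lesssim \Lambda(u),
\end{equation*}
because $\3bar \varepsilon_h \3bar$ was already bounded by $\Lambda(u)$ in the first step. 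Adding the two estimates yields \eqref{erres}.

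I expect the continuity estimate for $\ell_u$ to be the main obstacle, and within it the low-regularity case $s=0$. There the solution is only assumed to lie in $H^{2-\epsilon}(T)$, so the classical full-order approximation of the gradient traces is unavailable; instead I would apply the trace inequality \eqref{tracein} with $\theta=\epsilon$ to $\nabla(u-{\cal Q}_h^0u)$ and combine it with the fractional estimates of Lemma~\ref{Lemma:8.1}, which is what produces the mixed quantity $\|\nabla u\|+h^{\frac12-\epsilon}\|u\|_{2-\epsilon}$ and the loss of half a power reflected in the exponent $\tfrac12-\epsilon$. The other delicate point is the bookkeeping of the elementwise weights $(|a|_T+|\bb\cdot\bn|)$ across the three components of $\3bar w \3bar$, so that the diffusive and convective contributions separate cleanly into the $|a|^{1/2}h^{-1}$ and the ``$+1$'' factors appearing in \eqref{erres}.
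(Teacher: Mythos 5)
Your proposal is correct and follows essentially the same route as the paper: the identity $\3bar\varepsilon_h\3bar^2=\ell_u(\varepsilon_h)$ from the error equations, the three-way splitting of $\ell_u$ with the orthogonality argument for $s=k-1$ and the $\gamma^{-1/2}$ absorption for $s=k-2$, the trace inequality with $\theta=\epsilon$ for the low-regularity case $s=0$, and finally the \emph{inf-sup} condition applied to $b(e_h,w)=\ell_u(w)-s(\varepsilon_h,w)$. The only cosmetic difference is that you phrase the stabilizer bound through the operator norm $\sup_w \ell_u(w)/\3bar w\3bar$ rather than estimating $\ell_u(\varepsilon_h)$ directly, which amounts to the same computation.
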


\begin{proof} By letting $w= \varepsilon_h=\{\varepsilon_0,\varepsilon_b,\varepsilon_n\}$ in (\ref{sehv}) and using
(\ref{sehv2}) we arrive at
\begin{equation}\label{EQ:April7:001}
s(\varepsilon_h, \varepsilon_h) = \ell_u(\varepsilon_h),
\end{equation}
where, by \eqref{lu},
\begin{equation}\label{lu-es}
\begin{split}
\qquad \ell_u(\varepsilon_h) =&\sum_{T\in {\cal T}_h}   ({\cal L}\varepsilon_0+\bb \cdot \nabla \varepsilon_0, u-{\cal Q}_h^su)_T \\
 &+\langle \varepsilon_0-\varepsilon_b, (a \nabla ( u-{\cal Q}_h^su)-\bb (u-{\cal Q}_h^su))\cdot
\textbf{n}\rangle_{\partial T} \\
& +\langle \varepsilon_n - a\nabla \varepsilon_0\cdot \bn, u-{\cal Q}_h^su\rangle_{\partial T}\\
=& \sum_{T\in {\cal T}_h} \left(J_1(T) + J_2(T)+J_3(T)\right).
\end{split}
\end{equation}
Here $J_i(T)$ is given by the corresponding term in the summation formula for $i=1,2,3$. The rest of the proof is focused on the estimate for each $J_i(T)$.

\underline{\em $J_1(T)$-estimate:} \ We recall that $s=k-2$ or $s=k-1$ is the degree of polynomials for approximating the primal variable $u$. As ${\cal L}\varepsilon_0+\bb \cdot \nabla \varepsilon_0$ is a polynomial of degree $k-1$ on each element $T$, it follows that $J_1(T)=0$ when $s=k-1$. For the case of $s=k-2$, one may use the Cauchy-Schwarz inequality to obtain
\begin{equation}\label{EQ:J1}
\begin{split}
|J_1(T)| = & \left|({\cal L}\varepsilon_0+\bb \cdot \nabla \varepsilon_0, u-{\cal Q}_h^su)_T\right|\\
\leq & \left| ({\cal L}\varepsilon_0+\bb \cdot \nabla \varepsilon_0, u-{\cal Q}_h^su)_T\right|\\
\leq & \| {\cal L}\varepsilon_0+\bb \cdot \nabla \varepsilon_0\|_T \|u-{\cal Q}_h^su\|_T\\
\lesssim & h_T^{s+1} \|\nabla^{s+1} u\|_T \| {\cal L}\varepsilon_0+\bb \cdot \nabla \varepsilon_0\|_T,
\end{split}
\end{equation}
where we have used the following interpolation error estimate in the last line:
\begin{equation}\label{EQ:Interpo-EE}
 \|u-{\cal Q}_h^su\|_T \le C h^{s+1} \|\nabla^{s+1} u\|_T.
\end{equation}
By summing \eqref{EQ:J1} over all $T\in\T_h$ we have from \eqref{EQ:triplebarnorm}, \eqref{EQ:s-form}, and \eqref{EQ:sT-form} that
\begin{equation}%\label{EQ:J2:002}
\sum_{T\in\T_h}|J_1(T)|\lesssim
\left\{
\begin{array}{lr}
\gamma^{-\frac12} h^{s+1} \|\nabla^{s+1} u\| \3bar\varepsilon_h\3bar,&\quad \mbox{for } s=k-2,\\
0,&\quad \mbox{for } s=k-1.
\end{array}
\right. \label{EQ:J1:002}
\end{equation}

\underline{\em $J_2(T)$-estimate:} From the usual Cauchy-Schwarz inequality and the boundedness of the convective vector $\bb$ we arrive at
\begin{equation}\label{EQ:J2}
\begin{split}
&|J_2(T)| \\
= & \left|\langle \varepsilon_0-\varepsilon_b, (a \nabla ( u-{\cal Q}_h^su)-\bb (u-{\cal Q}_h^su))\cdot
\textbf{n}\rangle_{\partial T}\right|\\
\le & \left| \langle \varepsilon_0-\varepsilon_b, a \nabla ( u-{\cal Q}_h^su)\cdot\textbf{n}\rangle_\pT \right| + \left| \langle \varepsilon_0-\varepsilon_b,(u-{\cal Q}_h^su)\bb\cdot
\textbf{n}\rangle_{\pT}\right|\\
\lesssim & |a|_T \|\varepsilon_0-\varepsilon_b\|_\pT \|\nabla ( u-{\cal Q}_h^su)\|_\pT + \||\bb\cdot\bn|^{\frac12}(\varepsilon_0-\varepsilon_b)\|_\pT \| u-{\cal Q}_h^su\|_\pT\\
\lesssim & \left(|a|_T^{\frac12} \|\nabla ( u-{\cal Q}_h^su)\|_\pT + \|u-{\cal Q}_h^su\|_\pT \right)\3bar \varepsilon_h\3bar_{b,T}.
\end{split}
\end{equation}
The boundary integral $\|u-{\cal Q}_h^su\|_\pT$ can be handled by using the trace inequality \eqref{tracein} and the estimate \eqref{EQ:Interpo-EE} as follows
\begin{equation}\label{EQ:188}
  \|u-{\cal Q}_h^su\|_\pT \lesssim h_T^{s+\frac12}\|\nabla^{s+1}u\|_T.
\end{equation}
As to the term $\|\nabla ( u-{\cal Q}_h^su)\|_\pT$, for $s\ge 1$, from the error estimate for the $L^2$ projection ${\cal Q}_h^su$ and the trace inequality \eqref{tracein} we have
\begin{equation}\label{EQ:189}
\|\nabla ( u-{\cal Q}_h^su)\|_\pT \lesssim h_T^{s-\frac12}\|\nabla^{s+1} u\|_T.
\end{equation}
For $s=0$, the above estimate must be modified by using the trace inequality \eqref{tracein} with $\theta=\epsilon$ as follows
\begin{equation}\label{EQ:190}
\|\nabla ( u-{\cal Q}_h^su)\|_\pT \lesssim h_T^{-\frac12}\|\nabla u\|_T + h_T^{\frac12-\epsilon}\|\nabla u\|_{1-\epsilon,T}.
\end{equation}

Next, by substituting \eqref{EQ:188}-\eqref{EQ:190} into \eqref{EQ:J2} we have
\begin{equation*}%\label{EQ:J2:002}
|J_2(T)|\lesssim
\left\{
\begin{array}{lr}
h_T^{s+\frac12}(|a|_T^{\frac12} h_T^{-1} + 1)\|\nabla^{s+1} u\|_T \3bar\varepsilon_h\3bar_{b,T},&\quad \mbox{for } s\ge 1,\\
h_T^{\frac12}(|a|_T^{\frac12} h_T^{-1} + 1)(\|\nabla u\|_T+h_T^{\frac12-\epsilon}\|\nabla u\|_{1-\epsilon,T}) \3bar\varepsilon_h\3bar_{b,T},&\quad \mbox{for } s=0,
\end{array}
\right.
\end{equation*}
Summing over $T\in\T_h$ and then using the Cauchy-Schwarz inequality and \eqref{EQ:sb-form} gives
\begin{equation}%\label{EQ:J2:002}
\sum_{T\in\T_h}|J_2(T)|\lesssim
\left\{
\begin{array}{lr}
 h^{s+2} (|a|^{\frac12} h^{-1} + 1)\|\nabla^{s+1} u\| \3bar\varepsilon_h\3bar_b, &\mbox{if } s\ge 1,\\
h^{2}(|a|^{\frac12} h^{-1} + 1)(\|\nabla u\|+h^{\frac12-\epsilon}\|u\|_{2-\epsilon})  \3bar\varepsilon_h\3bar_b, &\mbox{if } s=0.
\end{array}
\right. \label{EQ:J2:002}
\end{equation}

\underline{\em $J_3(T)$-estimate:} From the Cauchy-Schwarz and the trace inequality \eqref{tracein} we obtain
\begin{equation}\label{EQ:J3}
\begin{split}
|J_3(T)| = & \left| \langle \varepsilon_n - a\nabla \varepsilon_0\cdot \bn, u-{\cal Q}_h^su\rangle_{\partial T}\right|\\
\le & \|\varepsilon_n-a\nabla \varepsilon_0\cdot \bn \|_\pT \|u-{\cal Q}_h^su\|_\pT\\
\lesssim & \|\varepsilon_n-a\nabla \varepsilon_0\cdot \bn \|_\pT \left(h_T^{-1}\|u-{\cal Q}_h^su\|_T^2 + h_T \|\nabla(u-{\cal Q}_h^su)\|_T^2 \right)^{1/2}\\
\lesssim & h_T^{s+\frac12} \|\varepsilon_n-a\nabla \varepsilon_0\cdot \bn \|_\pT   \|\nabla^{s+1} u\|_T.
\end{split}
\end{equation}
Summing over all the element $T\in \T_h$ yields
\begin{equation}\label{EQ:J3-002}
\begin{split}
\sum_{T\in {\cal T}_h} |J_3(T)|\lesssim & \sum_{T\in {\cal T}_h} h_T^{s+\frac12} \|\varepsilon_n-a\nabla \varepsilon_0\cdot \bn \|_\pT   \|\nabla^{s+1} u\|_T\\
\lesssim & h^{s+1} \|\nabla^{s+1} u\|\left(\sum_{T\in {\cal T}_h}  h_T^{-1}  \|\varepsilon_n-a\nabla \varepsilon_0\cdot \bn \|_\pT^2\right)^{1/2}\\
\lesssim & h^{s+1} \|\nabla^{s+1} u\| \3bar \varepsilon_h\3bar.
\end{split}
\end{equation}

By combining \eqref{lu-es} with the estimates \eqref{EQ:J1:002}, \eqref{EQ:J2:002}, and \eqref{EQ:J3-002} we arrive at
\begin{equation*}%\label{aij}
|\ell_u(\varepsilon_h)|\lesssim \left\{
\begin{array}{lr}
 h^{s+2} (|a|^{\frac12} h^{-1} + 1 +\delta_{s,k-2}\gamma^{-\frac12})\|\nabla^{s+1} u\| \3bar\varepsilon_h\3bar, &\mbox{for } s\ge 1,\\
 h^{2} (|a|^{\frac12} h^{-1} + 1+\gamma^{-\frac12})(\|\nabla u\|+h^{\frac12-\epsilon}\|u\|_{2-\epsilon})  \3bar\varepsilon_h\3bar, &\mbox{for } s=0,
\end{array}
\right.
\end{equation*}
where $\delta_{i, j}$ is the Kronecker delta with value $1$ for $i=j$ and $0$ otherwise. Substituting the above estimate into (\ref{EQ:April7:001}) yields
\begin{equation*}%\label{aij}
\3bar \varepsilon_h \3bar^2\lesssim \left\{
\begin{array}{lr}
 h^{s+2} (|a|^{\frac12} h^{-1} + 1 +\delta_{s,k-2}\gamma^{-\frac12})\|\nabla^{s+1} u\| \3bar\varepsilon_h\3bar, &\mbox{for } s\ge 1,\\
 h^{2} (|a|^{\frac12} h^{-1} + 1+\gamma^{-\frac12})(\|\nabla u\|+h^{\frac12-\epsilon}\|u\|_{2-\epsilon})  \3bar\varepsilon_h\3bar, &\mbox{for } s=0,
\end{array}
\right.
\end{equation*}
which leads to
\begin{equation}\label{EQ:April7:002}
\3bar \varepsilon_h \3bar  \lesssim
\left\{
\begin{array}{lr}
 h^{s+2} (|a|^{\frac12} h^{-1} + 1 +\delta_{s,k-2}\gamma^{-\frac12})\|\nabla^{s+1} u\|, &\mbox{for } s\ge 1,\\
 h^{2} (|a|^{\frac12} h^{-1} + 1+\gamma^{-\frac12})(\|\nabla u\|+h^{\frac12-\epsilon}\|u\|_{2-\epsilon}), &\mbox{for } s=0.
\end{array}
\right.
\end{equation}
Furthermore, the error equation (\ref{sehv}) yields
$$
b(e_h, w) = \ell_u(w)-s(\varepsilon_h, w), \qquad \forall w\in W_h^0.
$$
It follows that
\begin{equation*}
\begin{split}
|b(e_h, w)| \leq & |\ell_u(w)| + \3bar \varepsilon_h\3bar  \3bar w\3bar\\
\lesssim & \left\{
\begin{array}{lr}
 h^{s+2} (|a|^{\frac12} h^{-1} + 1 +\delta_{s,k-2}\gamma^{-\frac12})\|\nabla^{s+1} u\| \3bar w \3bar, &\mbox{for } s\ge 1,\\
 h^{2} (|a|^{\frac12} h^{-1} + 1+\gamma^{-\frac12})(\|\nabla u\|+h^{\frac12-\epsilon}\|u\|_{2-\epsilon})  \3bar w \3bar, &\mbox{for } s=0,
\end{array}
\right.
\end{split}
\end{equation*}
for all $w\in W_h^0$. Thus, from the \emph{inf-sup}
condition (\ref{EQ:inf-sup-condition-01}) we obtain
\begin{equation*}
\begin{split}
\beta_0   h^{\epsilon}\|e_h\|_{\epsilon} &\lesssim
\left\{
\begin{array}{lr}
 h^{s+2} (|a|^{\frac12} h^{-1} + 1 +\delta_{s,k-2}\gamma^{-\frac12})\|\nabla^{s+1} u\|, &\mbox{for } s\ge 1,\\
 h^{2}(|a|^{\frac12} h^{-1} + 1+\gamma^{-\frac12})(\|\nabla u\|+h^{\frac12-\epsilon}\|u\|_{2-\epsilon}), &\mbox{for } s=0,
\end{array}
\right.
\end{split}
\end{equation*}
which, together with the error estimate (\ref{EQ:April7:002}), completes the proof of the theorem.
\end{proof}

From the usual triangle inequality and the error estimate (\ref{erres}), we have the following estimate for the numerical approximation of the primal variable.

\begin{corollary} \label{ErrorEstimate:4Primal:002}
Under the assumptions of Theorem \ref{theoestimate}, one has the following optimal order error estimate in the $H^{\epsilon}$-norm for $\epsilon\in [0, \frac{1}{2})$:
\begin{equation*}\label{erres:L2}
h^{\epsilon} \|u - u_h\|_{\epsilon} \lesssim
\left\{
\begin{array}{lr}
 h^{s+2}(|a|^{\frac12} h^{-1} + 1 +\delta_{s,k-2}\gamma^{-\frac12})\|\nabla^{s+1} u\|, &\mbox{if } s\ge 1,\\
 h^{2}(|a|^{\frac12} h^{-1} + 1+\gamma^{-\frac12})(\|\nabla u\|+h^{\frac12-\epsilon}\|u\|_{2-\epsilon}), &\mbox{if } s=0.
\end{array}
\right.
\end{equation*}
\end{corollary}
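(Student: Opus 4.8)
The plan is to deduce the corollary from Theorem \ref{theoestimate} by inserting the $L^2$ projection ${\cal Q}_h^s u$ and applying the triangle inequality
\[
\|u-u_h\|_\epsilon \le \|u-{\cal Q}_h^s u\|_\epsilon + \|{\cal Q}_h^s u - u_h\|_\epsilon = \|u-{\cal Q}_h^s u\|_\epsilon + \|e_h\|_\epsilon,
\]
where $e_h = u_h-{\cal Q}_h^s u$ is the error function from \eqref{error}. After multiplying by $h^\epsilon$, the term $h^\epsilon\|e_h\|_\epsilon$ is already bounded by \eqref{erres}, so the entire task reduces to showing that the projection error $h^\epsilon\|u-{\cal Q}_h^s u\|_\epsilon$ is of the same order as, and hence absorbed into, the right-hand side of \eqref{erres}.

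To estimate the projection error I would first pass from the global fractional norm to a broken, element-by-element one. Since ${\cal Q}_h^s u$ is only piecewise polynomial while $u$ is globally continuous, the difference $u-{\cal Q}_h^s u$ is discontinuous across interelement boundaries; nevertheless, because $\epsilon<\frac12$, the jumps do not enter the $H^\epsilon$-seminorm and one has the norm equivalence $\|u-{\cal Q}_h^s u\|_\epsilon^2 \lesssim \sum_{T\in{\cal T}_h}\|u-{\cal Q}_h^s u\|_{\epsilon,T}^2$. This is exactly where the restriction $\epsilon\in[0,\frac12)$ is used. With this reduction, I would invoke Lemma \ref{Lemma:8.1} with $t=\epsilon$ (admissible since $\epsilon<\frac12\le\min(2,k)$): estimate \eqref{error2} for the case $s=k-1$ and estimate \eqref{error3} for the case $s=k-2$, choosing the largest admissible index $m$ (namely $m=s+1$ for $s\ge1$, and $m=1$ for $s=0$). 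Invoking the mesh regularity to replace the weight $h_T^{2\epsilon}$ of Lemma \ref{Lemma:8.1} by $h^{2\epsilon}$ then gives
\[
h^\epsilon\|u-{\cal Q}_h^s u\|_\epsilon \lesssim \Big(\sum_{T\in{\cal T}_h} h_T^{2\epsilon}\|u-{\cal Q}_h^s u\|_{\epsilon,T}^2\Big)^{1/2} \lesssim h^{s+1}\|\nabla^{s+1}u\|\quad(s\ge1),
\]
together with the analogous bound $h^\epsilon\|u-{\cal Q}_h^0 u\|_\epsilon \lesssim h\,\|\nabla u\|$ for $s=0$.

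It then remains to verify that these projection bounds are dominated by \eqref{erres}. For $s\ge1$ the leading contribution on the right of \eqref{erres} comes from the $|a|^{\frac12}h^{-1}$ factor and is of order $h^{s+1}|a|^{\frac12}\|\nabla^{s+1}u\|$, i.e.\ of the same order $h^{s+1}$ as the projection error; for $s=0$ the leading term is of order $h\,|a|^{\frac12}\|\nabla u\|$, again matching the projection error. Consequently both pieces of the triangle inequality are controlled by the right-hand side of \eqref{erres}, and adding them yields the stated estimate.

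The main obstacle is the step that legitimizes measuring the discontinuous piecewise-polynomial error in the \emph{global} $H^\epsilon$-norm: one must establish the broken-to-global norm equivalence and confirm that it holds precisely because $\epsilon<\frac12$, since for $\epsilon\ge\frac12$ the interelement jumps of ${\cal Q}_h^s u$ would make the global $H^\epsilon$-seminorm blow up and the argument would fail. A secondary, bookkeeping-type point is to reconcile the elementwise weights $h_T^{2\epsilon}$ in Lemma \ref{Lemma:8.1} with the single global factor $h^\epsilon$ in the target estimate (handled through the shape-regularity of ${\cal T}_h$) and to confirm that the full-norm bounds $\|u\|_m$ of Lemma \ref{Lemma:8.1} are consistent, up to the assumed regularity of $u$, with the seminorm $\|\nabla^{s+1}u\|$ appearing in \eqref{erres}.
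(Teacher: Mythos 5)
Your proposal is correct and follows the same route as the paper: the paper's entire proof of Corollary \ref{ErrorEstimate:4Primal:002} is the single remark that it follows from the triangle inequality $\|u-u_h\|_{\epsilon}\le \|u-{\cal Q}_h^s u\|_{\epsilon}+\|e_h\|_{\epsilon}$ together with the estimate \eqref{erres}. The only difference is that you spell out the bound for the projection term $h^{\epsilon}\|u-{\cal Q}_h^s u\|_{\epsilon}$ (the broken-to-global norm reduction valid for $\epsilon<\tfrac12$ plus Lemma \ref{Lemma:8.1}), a step the paper leaves entirely implicit.
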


\medskip
We emphasize that for $s=k-1$ one has
$
({\cal L} \varepsilon_0+\bb \cdot \nabla \varepsilon_0, u-{\CQ}_h^{s}u)_T =0.
$
The proof of Theorem \ref{theoestimate} indicates that the following term
$$
 \gamma \int_T ({\cal L} \lambda_0+\bb \cdot\nabla \lambda_0)({\cal L} w_0+\bb \cdot\nabla w_0)dT
$$
in the stabilizer $s_T(\cdot,\cdot)$ \eqref{EQ:sT-form} is no longer needed in the PD-WG numerical scheme (\ref{32})-(\ref{2}). The corresponding error estimate can be stated as follows:
\begin{equation*}%\label{erres:L2}
 h^{\epsilon}\|u - u_h\|_{\epsilon} \lesssim
\left\{
\begin{array}{lr}
 h^{s+2} (|a|^{\frac12} h^{-1} + 1)\|\nabla^{s+1} u\|, &\mbox{if } s\ge 1,\\
 h^{2} (|a|^{\frac12} h^{-1} + 1+\gamma^{-\frac12})(\|\nabla u\|+h^{\frac12-\epsilon}\|u\|_{2-\epsilon}), &\mbox{if } s=0.
\end{array}
\right.
\end{equation*}

\section{Numerical Results}\label{Section:numerics}
This section shall report a variety of numerical results for the primal-dual weak Galerkin finite element scheme (\ref{32})-(\ref{2}) of the lowest order; i.e., $k=2$ and $s=0,1$. Our finite element partition $\T_h$ is given through a successive uniform refinement of a coarse triangulation of the domain by dividing each coarse level triangular element into four congruent sub-triangles by connecting the three mid-points on its edge.

Both convex and non-convex polygonal domains are considered in the numerical experiments. The representatives of the convex domains are two squares $\Omega_1=(0,1)^2$ and $\Omega_3=(-1, 1)^2$. The non-convex domains are featured by three examples: (i) the L-shaped domain $\Omega_2$ with vertices $A_1=(0, 0)$, $A_2=(2, 0)$, $A_3=(2, 1)$, $A_4=(1, 1)$, $A_5=(1, 2)$, and $A_6=(0, 2)$; (ii) the cracked square domain $\Omega_4=(-1, 1)^2\setminus(0,1)\times{0}$ (i.e., a crack along the edge $(0, 1)\times 0$); and (iii) the L-shaped domain $\Omega_5$ with vertices $B_1=(-1, -1)$, $B_2=(1, -1)$, $B_3=(1, 0)$, $B_4=(0, 0)$, $B_5=(0, 1)$, and $B_6=(-1, 1)$.

The numerical method is based on the following configuration of the weak finite element space
$$
W_{h, 2}=\{\lambda_h=\{\lambda_0,\lambda_b, \lambda_n\}: \ \lambda_0\in P_2(T), \lambda_b\in P_2(e), \lambda_n\in P_1(e), e\subset\pT, T\in {\cal T}_h\},
$$
and
$$
M_{h, s}=\{u_h: \ u_h|_T \in P_s(T),\ \forall T\in {\cal T}_h
\}, \quad s=0\  \mbox{or}\ 1.
$$

The weak finite element space $W_{h,k}$ is said to be of $C^0$-type if for any $v=\{v_0, v_b, v_n\} \in W_{h, k}$, one has $v_b=v_0|_\pT$ on each element $T\in \T_h$. Likewise, $C^{-1}$-type elements are defined as the general case of $v=\{v_0, v_b, v_n\} \in W_{h, k}$ for which $v_b$ is completely independent of $v_0$ on the edge of each element. It is clear that $C^0$-type elements involve fewer degrees of freedom compared with the $C^{-1}$-type elements. But $C^{-1}$-type elements have the flexibility in element construction and approximation. It should be noted that, for $C^{-1}$-type elements, the unknowns associated with $v_0$ can be eliminated locally on each element in parallel through a condensation algorithm before assembling the global stiffness matrix.

For simplicity of implementation, our numerical experiments will be conducted for $C^0$-type elements; i.e., $\lambda_b=\lambda_0$ on $\pT$ for each element $T\in \T_h$. For convenience, the $C^0$-type WG element with $s=1$ (i.e., $M_{h, 1}$) will be denoted as $C^0$-$P_2(T)/P_1(\pT)/P_1(T)$. Analogously, the $C^0$-type WG element corresponding to $s=0$ (i.e., $M_{h,0}$) shall be denoted as $C^0$-$P_2(T)/P_1(\pT)/P_0(T)$.

Let $\lambda_h=\{\lambda_0, \lambda_n\}\in W_{h,2}$ and $u_h\in M_{h,s}\ (s=0,1)$ be the numerical solutions arising from (\ref{32})-(\ref{2}). To demonstrate the performance of the numerical method, the numerical solutions are compared with some appropriately-chosen interpolations of the exact solution $u$ and $\lambda$ in various norms. In particular, the primal variable $u_h$ is compared with the exact solution $u$ on each element at either the three vertices (for $s=1$) or the center (for $s=0$) -- known as the nodal point interpolation $I_h u$. The auxiliary variable $\lambda_h$ approximates the true solution $\lambda=0$, and is compared with $Q_h\lambda=0$. Thus, the error functions are respectively denoted by
$$
\varepsilon_h=\lambda_h-Q_h\lambda\equiv \{\lambda_0, \lambda_n\}, \
\
e_h=u_h-I_h u.
$$
The following norms are used to measure the error functions:
\begin{eqnarray*}
\mbox{$L^2$-norm:}\quad & &  \|e_h\|_0=\Big(\sum_{T\in {\cal
T}_h} \int_T e_h^2 dT\Big)^{\frac{1}{2}},\\
\mbox{$L^2$- norm:}\quad & &  \3bar \lambda_h\3bar_0=\Big(\sum_{T\in {\cal T}_h}
\int_T \lambda_0^2 dT\Big)^{\frac{1}{2}},\\
\mbox{Semi $H^1$-norm:}\quad  & &
\3bar \lambda_h\3bar_1=\Big(\sum_{T\in {\cal T}_h} h_T
\int_{\partial T}
\lambda_n^2 ds\Big)^{\frac{1}{2}}.
\end{eqnarray*}

Tables \ref{NE:TRI:Case1-1}-\ref{NE:TRI:Case1-4} illustrate the performance of the PD-WG finite element scheme for the test problem (\ref{model}) when the exact solution is given by $u=\sin(x)\cos(y)$ for the $C^0$-type $P_2(T)/P_1(\pT)/P_1(T)$ element on the unit square domain $\Omega_1$ and the L-shaped domain $\Omega_2$ with different boundary conditions, with stabilizer parameter $\gamma=0$. The diffusion tensor in (\ref{model}) is given by $a=[10^{-10}, 0; 0, 10^{-10}]$ and the convection tensor by $\bb=[1, 1]$ which makes it a convection-dominated diffusion problem. The right-hand side function $f$, the Dirichlet boundary data $g_1$, and the Neumann boundary data $g_2$ are chosen to match the exact solution $u$. The numerical results in Tables \ref{NE:TRI:Case1-1}-\ref{NE:TRI:Case1-4} show that the convergence rates for the error function $e_h$ are of order $r=2$ in the discrete $L^2$-norm on both the unit square domain $\Omega_1$ and the L-shaped domain $\Omega_2$. The numerical results are in great consistency with the theoretical rate of convergence for $e_h$ in the discrete $L^2$-norm on the convex domain $\Omega_1$. The computational results for the non-convex domain $\Omega_2$ outperforms the theory shown in the previous section. It is interesting to see from Tables \ref{NE:TRI:Case1-1}--\ref{NE:TRI:Case1-4} that the absolute error for the numerical solution $\lambda_0$ which approximates the exact solution $\lambda=0$ is extremely small.

\begin{table}[H]
\begin{center}
\caption{Numerical rates of convergence for the $C^0$-
$P_2(T)/P_1(\pT)/P_1(T)$ element with exact solution $u=\sin(x)\cos(y)$ on
$\Omega_1=(0,1)^2$; the diffusion tensor $a=[10^{-10}, 0; 0, 10^{-10}]$; the convection vector $\bb=[1,1]$; the stabilizer parameter $\gamma=0$; Dirichlet boundary condition.}\label{NE:TRI:Case1-1}
\begin{tabular}{|c|c|c|c|c|c|c|}
\hline
$1/h$        & $\3bar\lambda_h\3bar_0 $ & order &  $\3bar\lambda_h\3bar_{1} $  & order  &   $\|e_h\|_0$  & order  \\
\hline
1&	4.82E-13&&0.008309&&	0.07550&
\\
\hline
2	&7.82E-14&	2.622&	0.001435 	&2.533 	&0.02264 	&1.738
\\
\hline
4	&1.17E-14&	2.744&	1.48E-04&	3.274 &	0.005134	&2.141
\\
\hline
8&	1.12E-15	&3.387 	&1.18E-05	&3.657	&0.001143	&2.167
\\
\hline
16	&8.30E-17	&3.749&	8.17E-07&	3.848 &	2.67E-04	&2.099
\\
\hline
32	&5.59E-18	&3.893 	&5.35E-08	&3.931 	&6.45E-05	&2.048
\\
\hline
\end{tabular}
\end{center}
\end{table}

\begin{table}[H]
\begin{center}
\caption{Numerical rates of convergence for the $C^0$-
$P_2(T)/P_1(\pT)/P_1(T)$ element with exact solution $u=\sin(x)\cos(y)$ on
$\Omega_1=(0,1)^2$; the diffusion tensor $a=[10^{-10}, 0; 0, 10^{-10}]$; the convection vector $\bb=[1,1]$; the stabilizer parameter $\gamma=0$; Neumann boundary condition on the boundary edge $(0,1)\times\{0\}$ and Dirichlet boundary condition on other three boundary edges.}\label{NE:TRI:Case1-2}
\begin{tabular}{|c|c|c|c|c|c|c|}
\hline
$1/h$        & $\3bar\lambda_h\3bar_0 $ & order &  $\3bar\lambda_h\3bar_{1} $  & order  &   $\|e_h\|_0$  & order  \\
\hline
1	&2.63E-13	&&	0.005393 	&&	0.07722	&
\\
\hline
2	&6.61E-14&	1.991&0.001270&2.087	&0.02388	&1.693
\\
\hline
4&	1.16E-14&	2.514&	1.58E-04	&3.009&	0.005821	&2.036
\\
\hline
8	&1.69E-15&	2.776 	&1.52E-05&	3.378 	&0.001425 &2.030
\\
\hline
16	&2.54E-16&	2.731 &	1.33E-06	&3.514 &	3.55E-04&	2.005
\\
\hline
32	&3.66E-17&	2.798&	1.14E-07&3.538&	8.89E-05	&1.998
\\
\hline
\end{tabular}
\end{center}
\end{table}

\begin{table}[H]
\begin{center}
\caption{
Numerical rates of convergence for the $C^0$-
$P_2(T)/P_1(\pT)/P_1(T)$ element with exact solution $u=\sin(x)\cos(y)$ on
the L-shaped domain $\Omega_2$; the diffusion tensor $a=[10^{-10}, 0; 0, 10^{-10}]$; the convection vector
$\bb=[1,1]$; the stabilizer parameter $\gamma=0$; full Dirichlet boundary condition.}\label{NE:TRI:Case1-3}
\begin{tabular}{|c|c|c|c|c|c|c|}
\hline
$1/h$        & $\3bar\lambda_h\3bar_0 $ & order &  $\3bar\lambda_h\3bar_{1} $  & order  &   $\|u_h-I_h u\|_0$  & order  \\
\hline
1	&7.33E-13&&	0.01831 	&&	0.1836	&
\\
\hline
2	&5.23E-13	&0.4862	&0.003418 	&2.421 &	0.05162 &	1.830
\\
\hline
4&	6.80E-14&	2.943 & 3.53E-04&	3.277	&0.01181&2.128
\\
\hline
8&	9.70E-15	&2.811 &	3.27E-05&	3.432 &	0.002802 	&2.075
\\
\hline
16	&1.87E-15&	2.373&	3.10E-06&	3.399 &	6.88E-04	&2.026
 \\
\hline
\end{tabular}
\end{center}
\end{table}

\begin{table}[H]
\begin{center}
\caption{Numerical rates of convergence for the $C^0$-$P_2(T)/P_1(\pT)/P_1(T)$ element with exact solution $u=\sin(x)\cos(y)$ on the L-shaped domain $\Omega_2$; the diffusion tensor $a=[10^{-10}, 0; 0, 10^{-10}]$; the convection vector $\bb=[1,1]$; the stabilizer parameter $\gamma=0$; Neumann boundary condition on the boundary edge $(0,1)\times\{0\}$ and Dirichlet boundary condition on other boundary edges.}\label{NE:TRI:Case1-4}
\begin{tabular}{|c|c|c|c|c|c|c|}
\hline
$1/h$        & $\3bar\lambda_h\3bar_0 $ & order &  $\3bar\lambda_h\3bar_{1} $  & order  &   $\|e_h\|_0$  & order  \\
\hline
1	&2.81E-12	&&	0.03304 	&&	0.2771 &
\\
\hline
2	&5.91E-13	&2.249	&0.004297	&2.943 	&0.06903 &	2.005 \\
\hline
4	&8.11E-14	&2.866	&4.49E-04&	3.260	&0.01629 &2.083
\\
\hline
8	&1.18E-14	&2.780	&4.25E-05	&3.400	&0.003996	&2.028
\\
\hline
16	&2.12E-15	&2.481	&3.92E-06	&3.437 &	9.92E-04	&2.010
\\
\hline
\end{tabular}
\end{center}
\end{table}

Tables \ref{NE:TRI:Case2-1}--\ref{NE:TRI:Case2-2} illustrate the performance of the $C^0$-type $P_2(T)/P_1(\pT)/P_0(T)$ element for the model problem (\ref{model}) with exact solution $u=\sin(x)\sin(y)$ on different boundary conditions imposed on the unit square domain $\Omega_1$ and the L-shaped domain $\Omega_2$, respectively. The stabilizer parameter is taken to be $\gamma=0$ for the third term. The diffusion tensor is given by $a=[10^{-3}, 0; 0, 10^{-3}]$ and the convection vector by $\bb=[1, 1]$. Table \ref{NE:TRI:Case2-1} shows that the convergence for $u_h$ in the discrete $L^2$ norm is at the rate of ${\cal O}(h)$ which is consistent with what the theory predicts for the convex domain $\Omega_1$. On the L-shaped domain $\Omega_2$, the PD-WG method appears to be convergent at a rate better than ${\cal O}(h)$ which outperforms the theory prediction.

\begin{table}[H]
\begin{center}
\caption{Numerical rates of convergence for the $C^0$-$P_2(T)/P_1(\pT)/P_0(T)$ element with exact solution $u=\sin(x)\sin(y)$ on $\Omega_1$; the diffusion tensor $a=[10^{-3}, 0; 0, 10^{-3}]$; the convection vector $\bb=[1,1]$; the stabilizer parameter $\gamma=0$; full Dirichlet boundary condition.}\label{NE:TRI:Case2-1}
\begin{tabular}{|c|c|c|c|c|c|c|}
\hline
$1/h$ & $\3bar\lambda_h\3bar_0 $ & order &  $\3bar\lambda_h\3bar_{1} $  & order  &   $\|e_h\|_0$  & order  \\
\hline
1&	7.31E-09	&	&1.60E-04	&&0.06262 &
\\
\hline
2&	2.06E-04	&-14.78 &	0.002612	&-4.025 &1.761 &	-4.814
\\
\hline
4&	1.87E-05	&3.463 	&5.21E-04	&2.327 &	0.7674 &	1.198
\\
\hline
8	&1.28E-06&	3.873 &	7.37E-05	&2.821&	0.1996&1.943
\\
\hline
16	&8.43E-08&3.918 	&9.84E-06&	2.905&	0.04795 	&2.057
\\
\hline
32&	5.91E-09&	3.836	&1.37E-06&	2.850	&0.02394 	&1.002
\\
\hline
\end{tabular}
\end{center}
\end{table}

\begin{table}[H]
\begin{center}
\caption{Numerical rates of convergence for the $C^0$-
$P_2(T)/P_1(\pT)/P_0(T)$ element with exact solution $u=\sin(x)\sin(y)$ on
the L-shaped domain $\Omega_2$; the diffusion tensor $a=[10^{-3}, 0; 0, 10^{-3}]$; the convection vector $\bb=[1,1]$; the stabilizer parameter $\gamma=0$; Neumann boundary condition on the boundary edge $(0,1)\times\{0\}$ and Dirichlet boundary condition on other boundary edges.}\label{NE:TRI:Case2-2}
\begin{tabular}{|c|c|c|c|c|c|c|}
\hline
$1/h$        & $\3bar\lambda_h\3bar_0 $ & order &  $\3bar\lambda_h\3bar_{1} $  & order  &   $\|e_h\|_0$  & order  \\
\hline
1	&0.006017	&&	3.62E-02&&	10.32 	&
\\
\hline
2	&7.26E-04&	3.050 	&0.009553	&1.920 &	3.411 &	1.598
\\
\hline
4	&4.82E-05&	3.912 &	0.001289 &	2.890	&0.9021 &1.919
\\
\hline
8	&3.29E-06&	3.873 	&1.70E-04&	2.919 &	0.2906	&1.634
\\
\hline
16	&2.20E-07&3.905 &	2.21E-05	&2.947 	&0.1175&1.307
\\
\hline
\end{tabular}
\end{center}
\end{table}

Tables \ref{NE:TRI:Case9-1}-\ref{NE:TRI:Case9-2} show the numerical results with the $C^0$-$P_2(T)/P_1(\pT)/P_1(T)$ element and the $C^0$-$P_2(T)/P_1(\pT)/P_0(T)$ element on the cracked domain $\Omega_4$ when the exact solution is given by  $u=\sin(x)\sin(y)$. In this numerical experiment, we considered a convection-dominated diffusion problem in which the diffusion tensor is given by $a=[10^{-5}, 0; 0, 10^{-5}]$ and the convection vector by $\bb=[1, 0]$. The Neumann boundary condition is imposed on the inflow boundary $\{-1\}\times (-1, 1)$ (i.e., the edge where $\bb\cdot \bn<0$), and the Dirichlet boundary condition is imposed on the rest of the boundary. Tables \ref{NE:TRI:Case9-1}-\ref{NE:TRI:Case9-2} indicate that the convergence for $e_h$ in the discrete $L^2$ norm is at the rate of ${\cal O}(h^2)$ and ${\cal O}(h)$, respectively.

 \begin{table}[H]
\begin{center}
\caption{Numerical rates of convergence for the $C^0$-$P_2(T)/P_1(\pT)/P_1(T)$ element with exact solution $u=\sin(x)\sin(y)$ on cracked square domain $\Omega_4$; the diffusion tensor $a=[10^{-5}, 0; 0, 10^{-5}]$; the convection direction $\bb=[1,0]$; the stabilizer parameter $\gamma=0$; Neumann boundary condition on the inflow boundary edge $\{-1\}\times (-1, 1)$; and Dirichlet boundary condition on the rest of the boundary.}\label{NE:TRI:Case9-1}
\begin{tabular}{|c|c|c|c|c|c|c|}
\hline
$1/h$ & $\3bar\lambda_h\3bar_0 $ & order & $\3bar\lambda_h\3bar_{1} $  & order  & $\|e_h\|_0$  & order
\\
\hline
1	& 2.57E-11	& &  	1.63E-06	& & 	0.3623 	&
\\
\hline
2	& 2.51E-12& 	3.359 & 	1.09E-07& 	3.896 	& 0.09193	& 1.979
\\
\hline
4	& 3.53E-13	& 2.828 & 	7.47E-09& 	3.871 	& 0.02247& 	2.032
\\
\hline
8& 	6.00E-14	& 2.558	& 5.78E-10& 	3.691	& 0.005545 & 2.019
\\
\hline
16& 	1.30E-14	& 2.202 & 	5.26E-11	& 3.459& 0.001383 	& 2.003 \\
\hline
32&	5.20E-15&	1.325 	&7.18E-12	&2.873	&3.69E-04	&1.907 \\
\hline
\end{tabular}
\end{center}
\end{table}

\begin{table}[H]
\begin{center}
\caption{Numerical rates of convergence for the $C^0$-$P_2(T)/P_1(\pT)/P_0(T)$ element with exact solution $u=\sin(x)\sin(y)$ on cracked square domain $\Omega_4$; the diffusion tensor $a=[10^{-5}, 0; 0, 10^{-5}]$; the convection vector $\bb=[1,0]$; the stabilizer parameter $\gamma=0$; Neumann boundary condition on the inflow boundary $\{-1\}\times (-1, 1)$ and Dirichlet boundary condition on the rest of the boundary.}\label{NE:TRI:Case9-2}
\begin{tabular}{|c|c|c|c|c|c|c|}
\hline
$1/h$ & $\3bar\lambda_h\3bar_0 $ & order & $\3bar\lambda_h\3bar_{1} $  & order  & $\|e_h\|_0$  & order
\\
\hline
1	&1.08E-04	&&	0.1127 	&&	0.1693 	&
\\
\hline
2	&4.66E-04&	-2.110 	&0.03210 	&1.812 &	0.09306&	0.8634
\\
\hline
4	&3.14E-05&	3.891 	&0.008091 &	1.988 &	0.04655&0.9995
\\
\hline
8	&1.90E-06	&4.044	&0.002012 &	2.008	&0.02300 &1.017
\\
\hline
16	&1.17E-07&	4.025&	4.95E-04	&2.023 &	0.01119 	&1.039
\\
\hline
32&7.92E-09&3.883 &1.17E-04&2.081 &0.005579& 1.004
\\
\hline
\end{tabular}
\end{center}
\end{table}

Table \ref{NE:TRI:Case3-1} illustrates the performance of the PD-WG method with the $C^0$- $P_2(T)/P_1(\pT)/P_0(T)$ element when the exact solution is given by $u=\sin(x)\sin(y)$ on the unit square domain $\Omega_1$. The diffusion tensor is given by $a=[1+x^2+y^2, 0; 0, 1+x^2+y^2]$ and the convection vector by $\bb=[x, y]$. The stabilizer parameter $\gamma=0$. The convergence for $e_h$ in the discrete $L^2$ norm is at the rate of ${\cal O}(h)$ which is consistent with what the theory predicts.

\begin{table}[H]
\begin{center}
\caption{Numerical rates of convergence for the $C^0$- $P_2(T)/P_1(\pT)/P_0(T)$ element with exact solution $u=\sin(x)\sin(y)$ on
$\Omega_1$; the diffusion tensor $a=[1+x^2+y^2, 0; 0, 1+x^2+y^2]$; the convection vector$\bb=[x, y]$; the stabilizer parameter $\gamma=0$; full Dirichlet boundary condition.}\label{NE:TRI:Case3-1}
\begin{tabular}{|c|c|c|c|c|c|c|}
\hline
$1/h$        & $\3bar\lambda_h\3bar_0 $ & order &  $\3bar\lambda_h\3bar_{1} $  & order  &   $\|e_h\|_0$  & order
  \\
\hline
1&	0.02967&&	0.4979 &&0.04851 &
  \\
\hline
2&	0.002843 &	3.384&	0.1173 	&2.086&	0.02801&	0.7925   \\
\hline
4&	4.53E-04	&2.649 &	0.02797&	2.069 &	0.01272 	&1.138
  \\
\hline
8	&1.02E-04&	2.155 &0.006792 	&2.042	&0.006047 	&1.073
  \\
\hline
16	&2.45E-05&	2.053	&0.001671 &2.023 	&0.002980	&1.021
  \\
\hline
32&6.07E-06	&2.016&	4.14E-04	&2.012 &0.001485&1.005
\\
\hline
\end{tabular}
\end{center}
\end{table}

Tables \ref{NE:TRI:Case4-1}-\ref{NE:TRI:Case4-3} illustrate the numerical performance of the numerical scheme with $C^0$- $P_2(T)/P_1(\pT)/P_1(T)$ and $C^0$- $P_2(T)/P_1(\pT)/P_0(T)$ elements when the exact solution is given by $u=xy(1-x)(1-y)$ on the unit square domain $\Omega_1$. The diffusion tensor is given by $a=[10^{-5}, 0; 0, 10^{-5}]$ and the convection vector by $\bb=[1,1]$; the stabilizer parameter is set as $\gamma=0$; and various boundary conditions are considered. This is a convection-dominated diffusion problem. The numerical results in Tables \ref{NE:TRI:Case4-1}-\ref{NE:TRI:Case4-2} show that the convergence for $e_h$ in the $L^2$ norm is at a rate higher than the theoretical prediction of ${\cal O}(h^2)$. Moreover, it can be seen from Table \ref{NE:TRI:Case4-3} that the convergence for $e_h$ in the $L^2$ norm is at a rate higher than the theoretical prediction of ${\cal O}(h)$.

\begin{table}[H]
\begin{center}
\caption{Numerical rates of convergence for the $C^0$-
$P_2(T)/P_1(\pT)/P_1(T)$ element with exact solution $u=xy(1-x)(1-y)$ on
$\Omega_1$; the diffusion tensor $a=[10^{-5}, 0; 0, 10^{-5}]$; the convection vector $\bb=[1,1]$; the stabilizer parameter $\gamma=0$; full Dirichlet boundary condition.}\label{NE:TRI:Case4-1}
\begin{tabular}{|c|c|c|c|c|c|c|}
\hline
$1/h$ & $\3bar\lambda_h\3bar_0 $ & order & $\3bar\lambda_h\3bar_{1} $  & order  & $\|e_h\|_0$  & order
\\
\hline
1&1.50E-11&&	1.41E-06	&&	1.73E-06&
\\
\hline
2&	9.30E-09	&-9.276&	0.001574&	-10.12 	&0.01608 &	-13.18 \\
\hline
4	&1.20E-09&	2.949	&1.73E-04&	3.185 &	0.002986 	&2.429
\\
\hline
8	&1.24E-10	&3.275 	&1.49E-05&	3.539	&4.72E-04	&2.660
\\
\hline
16&	1.05E-11&	3.571 &	1.10E-06	&3.753 &	6.54E-05	&2.852
\\
\hline
32&	7.56E-13	&3.791 &	7.51E-08&	3.879	&8.51E-06	& 2.943
\\
\hline
\end{tabular}
\end{center}
\end{table}
\begin{table}[H]
\begin{center}
\caption{Numerical rates of convergence for the $C^0$-
$P_2(T)/P_1(\pT)/P_1(T)$ element with exact solution $u=xy(1-x)(1-y)$ on
$\Omega_1$; the diffusion tensor $a=[10^{-5}, 0; 0, 10^{-5}]$; the convection vector $\bb=[1,1]$; the stabilizer parameter $\gamma=0$; Neumann boundary condition on the edge $\{0\}\times (0,1)$ and Dirichlet boundary condition on the rest of the boundary.}\label{NE:TRI:Case4-2}
\begin{tabular}{|c|c|c|c|c|c|c|}
\hline
$1/h$ & $\3bar\lambda_h\3bar_0 $ & order & $\3bar\lambda_h\3bar_{1} $  & order  & $\|e_h\|_0$  & order
\\
\hline
1&	5.76E-08&&	8.37E-03&&	5.11E-02	&
\\
\hline
2&	1.41E-08&	2.035 	&1.70E-03	&2.301	&2.43E-02&	1.070
\\
\hline
4&	2.81E-09&	2.325 &	2.06E-04	&3.047 	&5.60E-03	&2.118
\\
\hline
8	&4.26E-10	&2.719 	&1.99E-05	&3.366 	&1.21E-03&	2.211
\\
\hline
16	&5.77E-11	&2.885 	&1.74E-06&	3.518 	&2.66E-04	&2.187
\\
\hline
32&	7.49E-12	&2.946 &1.47E-07&	3.562 &6.09E-05	&2.126
\\
\hline
\end{tabular}
\end{center}
\end{table}

\begin{table}[H]
\begin{center}
\caption{Numerical rates of convergence for the $C^0$-
$P_2(T)/P_1(\pT)/P_0(T)$ element with exact solution $u=xy(1-x)(1-y)$ on
$\Omega_1$; the diffusion tensor $a=[10^{-5}, 0; 0, 10^{-5}]$; the convection vector $\bb=[1,1]$; the stabilizer parameter $\gamma=0$; full Dirichlet boundary condition.}\label{NE:TRI:Case4-3}
\begin{tabular}{|c|c|c|c|c|c|c|}
\hline
$1/h$ & $\3bar\lambda_h\3bar_0 $ & order & $\3bar\lambda_h\3bar_{1} $  & order  & $\|e_h\|_0$  & order
\\
\hline
1	&1.10E-04	&&	0.001213	&&	0.04557	&
\\
\hline
2&	3.23E-04	&-1.550	&0.002780&	-1.197	&0.02386 	&0.9333
\\
\hline
4	&7.97E-05	&2.019 	&6.21E-04&	2.161 &	0.004707 &2.342
\\
\hline
8	&1.62E-05	&2.294 &1.06E-04	&2.552 &	0.001344 &	1.808
\\
\hline
16&	3.72E-06	&2.128&	2.16E-05&	2.297 	&4.87E-04	&1.465
\\
\hline
32	&9.05E-07	&2.038	&5.03E-06&	2.101&	2.08E-04	&1.226
\\
\hline
\end{tabular}
\end{center}
\end{table}

Tables \ref{NE:TRI:Case5-1}--\ref{NE:TRI:Case5-2} illustrate the numerical
performance of the PD-WG method with the $C^0$- $P_2(T)/P_1(\pT)/P_1(T)$ element and $C^0$-type $P_2(T)/P_1(\pT)/P_0(T)$ element for a test problem with exact solution $u=y(1-y)(1-e^{-x})(1-e^{-(1-x)})$ on the unit square domain $\Omega_1$. The configuration of the test problem is as follows: the diffusion tensor $a=[10^{-3}, 0; 0, 10^{-3}]$; the convection vector $\bb=[1,1]$; the stabilizer parameter $\gamma=0$. The numerical results in Tables \ref{NE:TRI:Case5-1}--\ref{NE:TRI:Case5-2} show that the numerical convergence is in great consistency with our theory of error estimate.

\begin{table}[H]
\begin{center}
\caption{Numerical rates of convergence for the $C^0$-
$P_2(T)/P_1(\pT)/P_1(T)$ element with exact solution $u=y(1-y)(1-e^{-x})(1-e^{-(1-x)})$ on $\Omega_1$; the diffusion tensor $a=[10^{-3}, 0; 0, 10^{-3}]$; the convection vector $\bb=[1,1]$; the stabilizer parameter $\gamma=0$; Neumann boundary condition on the edge $\{0\}\times (0, 1)$ and the Dirichlet boundary condition on the rest of the boundary.}\label{NE:TRI:Case5-1}
\begin{tabular}{|c|c|c|c|c|c|c|}
\hline
$1/h$ & $\3bar\lambda_h\3bar_0 $ & order & $\3bar\lambda_h\3bar_{1} $  & order  & $\|e_h\|_0$  & order
\\
\hline
1&3.59E-06	&&	0.005209 	&&	0.03177 	&
\\
\hline
2	&8.90E-07&	2.014&0.001060 &	2.297	&0.01522 &	1.062
\\
\hline
4&	1.75E-07&	2.346	&1.29E-04& 3.044	&0.003510 &2.117
\\
\hline
8	&2.59E-08	&2.759	&1.27E-05& 3.343 &7.59E-04	&2.209
\\
\hline
16&	3.32E-09	&2.960 & 1.26E-06	&3.325 	&1.67E-04&	2.184
\\
\hline
32&	3.96E-10	&3.068 	&1.85E-07&	2.776 &	3.88E-05	&2.108
\\
\hline
\end{tabular}
\end{center}
\end{table}

\begin{table}[H]
\begin{center}
\caption{Numerical rates of convergence for the $C^0$-
$P_2(T)/P_1(\pT)/P_0(T)$ element with exact solution  $u=y(1-y)(1-e^{-x})(1-e^{-(1-x)})$ on $\Omega_1$; the diffusion tensor $a=[1, 0; 0, 1]$; the convection vector $\bb=[1,1]$; the stabilizer parameter $\gamma=0$;  full Dirichlet boundary condition.}\label{NE:TRI:Case5-2}
\begin{tabular}{|c|c|c|c|c|c|c|}
\hline
$1/h$ & $\3bar\lambda_h\3bar_0 $ & order & $\3bar\lambda_h\3bar_{1} $  & order  & $\|e_h\|_0$  & order
\\
\hline
1	&0.002518 &&	0.01444 	&&	0.02065&
\\
\hline
2	&0.002451 	&0.03860	&0.009379&	0.6226 &0.006689	&1.626
\\
\hline
4	&6.34E-04	&1.950 &	0.002354 	&1.994	&0.001807	&1.888
\\
\hline
8&	1.51E-04&	2.067 &	5.36E-04	&2.135 &6.09E-04	&1.569
\\
\hline
16	&3.71E-05&	2.029&	1.28E-04&	2.067&	2.59E-04	&1.236
\\
\hline
32&	9.22E-06	&2.009 &	3.15E-05	&2.022 	&1.23E-04	&1.074
\\
\hline
\end{tabular}
\end{center}
\end{table}

Tables \ref{NE:TRI:Case6-1}-\ref{NE:TRI:Case6-4} show the numerical results on the unit square domain $\Omega_1$ for the $C^0$- $P_2(T)/P_1(\pT)/P_1(T)$ and $C^0$- $P_2(T)/P_1(\pT)/P_0(T)$ elements, respectively. In this numerical experiment, we consider a convection-dominated diffusion problem by taking the diffusion tensor as $a=[10^{-5}, 0; 0, 10^{-5}]$ and the convection vector $\bb=[1, 0]$. The stabilizer parameter for the third term is given by $\gamma=0$; and Dirichlet boundary data is imposed on all the boundary edges. The exact solutions are chosen to be $u=0.5(1-\tanh((x-0.5)/0.2))$ and $u=0.5(1-\tanh((x-0.5)/0.05))$, respectively. The numerical results in Tables \ref{NE:TRI:Case6-1}-\ref{NE:TRI:Case6-2} indicate that the convergence for $e_h$ in the $L^2$ norm seem to arrive at a superconvergence rate of ${\cal O}(h^2)$ which is higher than the theoretical prediction of ${\cal O}(h)$ for the $C^0$- $P_2(T)/P_1(\pT)/P_0(T)$ element. Tables \ref{NE:TRI:Case6-3}-\ref{NE:TRI:Case6-4} show that the convergence for $e_h$ in the $L^2$ norm is at the rate of ${\cal O}(h^2)$ for the $C^0$- $P_2(T)/P_1(\pT)/P_1(T)$ element which is consistent with the theoretical error estimate.

\begin{table}[H]
\begin{center}
\caption{Numerical rates of convergence for the $C^0$- $P_2(T)/P_1(\pT)/P_0(T)$ element with exact solution $u=0.5(1-\tanh((x-0.5)/0.2))$ on $\Omega_1$; the diffusion tensor $a=[10^{-5}, 0; 0, 10^{-5}]$; the convection vector $\bb=[1,0]$; the stabilizer parameter $\gamma=0$; full Dirichlet boundary condition.}\label{NE:TRI:Case6-1}
\begin{tabular}{|c|c|c|c|c|c|c|}
\hline
$1/h$ & $\3bar\lambda_h\3bar_0 $ & order & $\3bar\lambda_h\3bar_{1} $  & order  & $\|e_h\|_0$  & order
\\
\hline
1&	5.93E-11	&&	4.28E-06	&&	0.02001 	&
\\
\hline
2	&2.22E-04&	-21.83 &	0.003797 &	-9.794&	1.62E+02	&-12.98 \\
\hline
4	&2.63E-05&	3.073 	&9.09E-04&	2.063 &	59.22 &	1.453 \\
\hline
8	&1.87E-06&	3.813 &	1.31E-04&	2.799	&10.42&2.507
\\
\hline
16	&1.21E-07&	3.949 &	1.70E-05	&2.945 	&2.215 	&2.234
\\
\hline
32&	7.69E-09	&3.979	&2.15E-06	&2.977 	&0.4904 &2.175 \\
\hline
\end{tabular}
\end{center}
\end{table}

\begin{table}[H]
\begin{center}
\caption{Numerical rates of convergence for the $C^0$- $P_2(T)/P_1(\pT)/P_0(T)$ element with exact solution  $u=0.5(1-\tanh((x-0.5)/0.05))$ on $\Omega_1$; the diffusion tensor $a=[10^{-5}, 0; 0, 10^{-5}]$; the convection vector $\bb=[1,0]$; the stabilizer parameter $\gamma=0$; full Dirichlet boundary condition.}\label{NE:TRI:Case6-2}
\begin{tabular}{|c|c|c|c|c|c|c|}
\hline
$1/h$ & $\3bar\lambda_h\3bar_0 $ & order & $\3bar\lambda_h\3bar_{1} $  & order  & $\|e_h\|_0$  & order
\\
\hline
1	&1.32E-10	&&	7.02E-06	&&0.06502  &
\\
\hline
2	&1.01E-04	&-19.54 &1.66E-03&	-7.882 &	57.82	&-9.796 \\
\hline
4	&1.86E-05	&2.442	&6.28E-04	&1.399 	&29.78 &0.9571
\\
\hline
8	&2.61E-06	&2.833 &	1.80E-04	&1.803&	6.781	&2.135
\\
\hline
16&	2.32E-07&	3.488 	&3.24E-05&	2.476 	&1.725&1.975
\\
\hline
32&1.54E-08&	3.913 &	4.32E-06&	2.906 &	0.4000&	2.109\\
\hline
\end{tabular}
\end{center}
\end{table}

\begin{table}[H]
\begin{center}
\caption{Numerical rates of convergence for the $C^0$- $P_2(T)/P_1(\pT)/P_1(T)$ element with exact solution  $u=0.5(1-\tanh((x-0.5)/0.2))$ on $\Omega_1$; the diffusion tensor $a=[10^{-5}, 0; 0, 10^{-5}]$; the convection vector $\bb=[1,0]$; the stabilizer parameter $\gamma=0$; full Dirichlet boundary condition.}\label{NE:TRI:Case6-3}
\begin{tabular}{|c|c|c|c|c|c|c|}
\hline
$1/h$ & $\3bar\lambda_h\3bar_0 $ & order & $\3bar\lambda_h\3bar_{1} $  & order  & $\|e_h\|_0$  & order
\\
\hline
1	&1.54E-06	&&	0.08745 	&&	0.1581 	&
\\
\hline
2	&2.84E-07&	2.437 &	0.02606 &1.746 	&0.2589 &-0.7115
\\
\hline
4	&4.60E-08	&2.629 &	0.002860&	3.188 &	0.06301 	&2.039
\\
\hline
8&	8.99E-09&	2.356 &	2.23E-04	&3.681&	0.01346 	&2.227
\\
\hline
16	&2.30E-09	&1.963 &	1.99E-05	&3.485 &	0.003350 	&2.006
\\
\hline
32&	5.89E-10&	1.968 &	1.98E-06& 3.331 &8.42E-04	&1.992
\\
\hline
\end{tabular}
\end{center}
\end{table}

\begin{table}[H]
\begin{center}
\caption{Numerical rates of convergence for the $C^0$-
$P_2(T)/P_1(\pT)/P_1(T)$ element with exact solution  $u=0.5(1-\tanh((x-0.5)/0.05))$ on $\Omega_1$; the diffusion tensor $a=[10^{-5}, 0; 0, 10^{-5}]$; the convection vector $\bb=[1,0]$; the stabilizer parameter $\gamma=0$; full Dirichlet boundary conditions.}\label{NE:TRI:Case6-4}
\begin{tabular}{|c|c|c|c|c|c|c|}
\hline
$1/h$ & $\3bar\lambda_h\3bar_0 $ & order & $\3bar\lambda_h\3bar_{1} $  & order  & $\|e_h\|_0$  & order
\\
\hline
1	&6.49E-06	&&	0.3685	&&	0.6662 	&
\\
\hline
2	&6.84E-07&	3.246&	0.07008 &	2.394 &	0.6305 &	0.07942
\\
\hline
4&	2.24E-07	&1.614	&0.01776 &	1.980 	&0.3130 	&1.010
\\
\hline
8	&4.12E-08	&2.439	&0.003311 	&2.423 &	0.1281 	&1.289
\\
\hline
16	&5.98E-09	&2.785 &	3.58E-04&	3.209 &	0.03184	&2.009 \\
\hline
32	&1.23E-09	&2.279 	&2.81E-05&	3.671 &	0.006791&2.229
\\
\hline
\end{tabular}
\end{center}
\end{table}

Tables \ref{NE:TRI:Case7-1}-\ref{NE:TRI:Case7-2} illustrate the numerical results for the $C^0$- $P_2(T)/P_1(\pT)/P_1(T)$ and the $C^0$- $P_2(T)/P_1(\pT)/P_0(T)$ elements on the unit square domain $\Omega_1$ with exact solution $u= e^{-(x-0.5)^2/0.2-3(y-0.5)^2/0.2}$. The test problem has the diffusion tensor $a=[10^{-5}, 0; 0, 10^{-5}]$ and the convection $\bb=[1, 0]$. The stabilizer parameters are chosen as $\gamma=1$ and $\gamma=0$, respectively. The Dirichlet boundary condition is imposed on the entire boundary. The numerical results in Table \ref{NE:TRI:Case7-1} show a superconvergence for $e_h$ in the $L^2$ norm, as the optimal order error estimate would imply a convergence at the rate of ${\cal O}(h)$ when the $C^0$- $P_2(T)/P_1(\pT)/P_0(T)$ element is used. Table \ref{NE:TRI:Case7-2} indicates that the convergence order for $e_h$ in the discrete $L^2$ norm is consistent with what the theory predicts.

\begin{table}[H]
\begin{center}
\caption{Numerical rates of convergence for the $C^0$- $P_2(T)/P_1(\pT)/P_0(T)$ element with exact solution  $u= e^{-(x-0.5)^2/0.2-3(y-0.5)^2/0.2}$ on $\Omega_1$; the diffusion tensor $a=[10^{-5}, 0; 0, 10^{-5}]$; the convection vector $\bb=[1,0]$; the stabilizer parameter $\gamma=1$; Dirichlet boundary condition on the entire boundary.}\label{NE:TRI:Case7-1}
\begin{tabular}{|c|c|c|c|c|c|c|}
\hline
$1/h$ & $\3bar\lambda_h\3bar_0 $ & order & $\3bar\lambda_h\3bar_{1} $  & order  & $\|e_h\|_0$  & order
\\
\hline
1&4.06E-15	&&	8.77E-07	&&	0.4682	&
\\
\hline
2	&3.21E-04&	-36.20 &	0.005274 & -12.55 &1.21E+02	&-8.016
\\
\hline
4	&2.52E-05&	3.673 	&7.73E-04&	2.771	&7.002 &4.113
\\
\hline
8&	1.35E-06&	4.221 	&9.47E-05	&3.029&	3.980	&0.8152
\\
\hline
16	&8.44E-08&	4.000 &	1.19E-05&	2.998 	&0.8133 &2.291 \\
\hline
32&	5.26E-09	&4.004&	1.48E-06&	3.005	&0.1313	&2.631
\\
\hline
\end{tabular}
\end{center}
\end{table}

\begin{table}[H]
\begin{center}
\caption{Numerical rates of convergence for the $C^0$-
$P_2(T)/P_1(\pT)/P_1(T)$ element with exact solution  $u=e^{-(x-0.5)^2/0.2-3(y-0.5)^2/0.2}$ on $\Omega_1$; the diffusion tensor $a=[10^{-5}, 0; 0, 10^{-5}]$; the convection $\bb=[1,0]$; the stabilizer parameter $\gamma=0$; Dirichlet boundary condition on the entire boundary.}\label{NE:TRI:Case7-2}
\begin{tabular}{|c|c|c|c|c|c|c|}
\hline
$1/h$ & $\3bar\lambda_h\3bar_0 $ & order & $\3bar\lambda_h\3bar_{1} $  & order  & $\|e_h\|_0$  & order
\\
\hline
1&2.80E-10	&&	0.05239&&	0.2339 	&
\\
\hline
2	&3.74E-07	&-10.38 	&0.01817 &1.528	&0.1609 &0.5398
\\
\hline
4&7.24E-08	&2.369 	&0.003381 	&2.426 	&0.1146 	&0.4893
\\
\hline
8&	1.77E-08	&2.035 &	3.09E-04	&3.452  &0.03390 &1.757
\\
\hline
16	&4.43E-09&	1.994 &	2.93E-05	&3.398 &	0.008362 	&2.019 \\
\hline
32	&1.27E-09&	1.799 &	3.30E-06&	3.151 &	0.002117 	&1.982
\\
\hline
\end{tabular}
\end{center}
\end{table}

Figure \ref{ux} illustrates the plots of the numerical solution $u_h$ arising from the PD-WG scheme (\ref{32})-(\ref{2}) for a convection-dominated diffusion problem on the unit square domain $\Omega_1$. In this numerical experiment, the diffusion tensor is given by $a=[10^{-5}, 0; 0, 10^{-5}]$, the convection vector by $\bb=[1, 0]$, and the load function is given by $f=1$. The Neumann boundary data $g_2=10^{-5}$ is imposed on the boundary edge $\{0\}\times (0, 1)$, and the Dirichlet boundary data $g_1=x$ is imposed on the rest of the boundary edges. The figure on the left shows the numerical solution $u_h$ when the $C^0$-type $P_2(T)/P_1(\pT)/P_1(T)$ element is used and the one on the right is for the numerical solution $u_h$ with the $C^0$-type $P_2(T)/P_1(\pT)/P_0(T)$ element.

\begin{figure}[h]
\centering
\begin{tabular}{cc}
\resizebox{2.4in}{2.1in}{\includegraphics{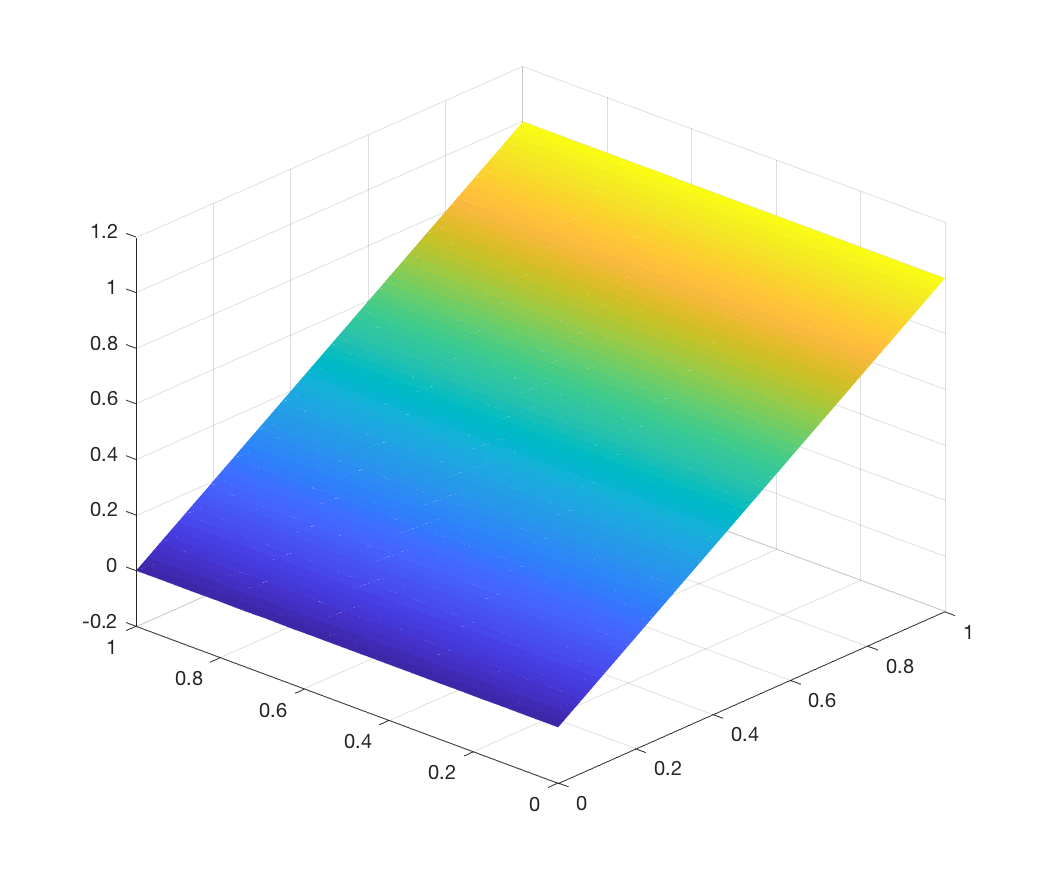}}
\resizebox{2.4in}{2.1in}{\includegraphics{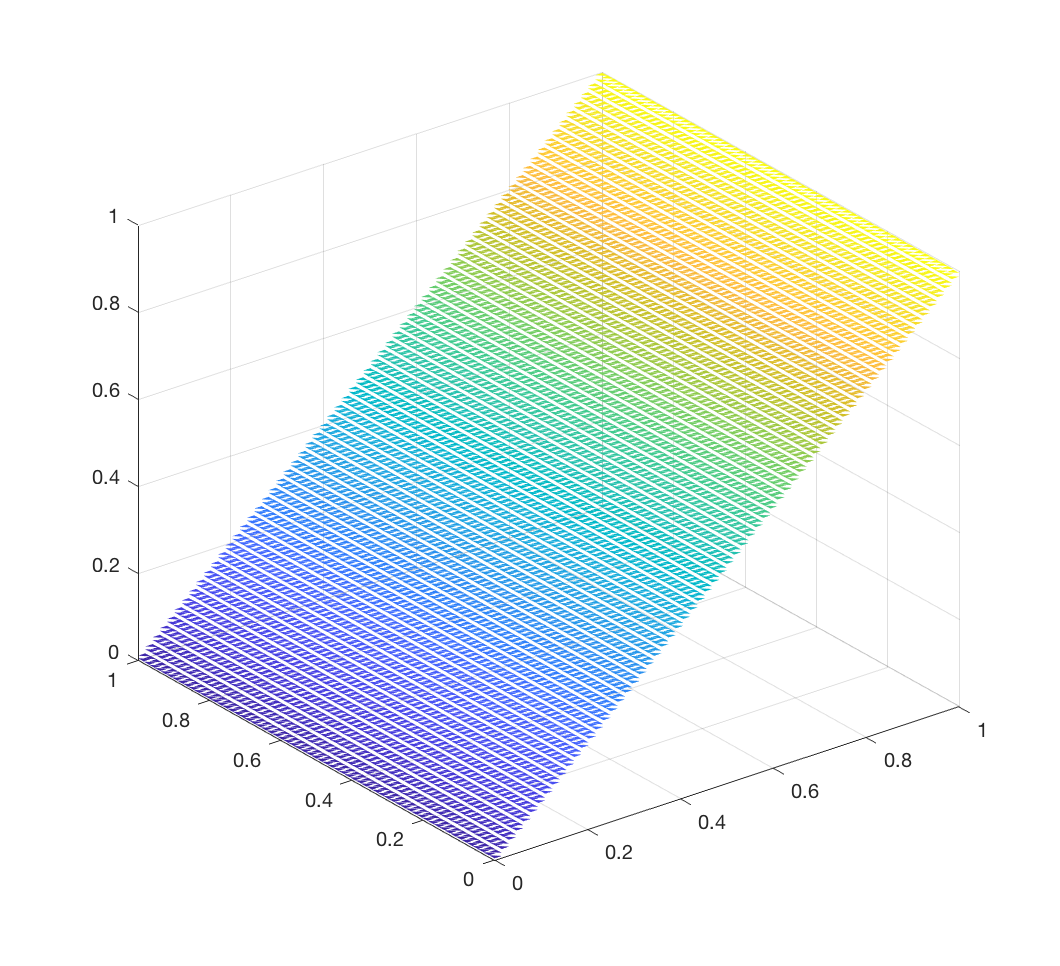}}
\end{tabular}
\caption{Surface plot of $u_h$ on the unit square domain $\Omega_1$: left for the $C^0$-$P_2(T)/P_1(\pT)/P_1(T)$ element, right for the $C^0$- $P_2(T)/P_1(\pT)/P_0(T)$ element.}
\label{ux}
\end{figure}

Figure \ref{athree} shows the plots for the numerical solution $u_h$ on the unit square domain $\Omega_1$ when the $C^0$-type $P_2(T)/P_1(\pT)/P_0(T)$ element is employed to the test problem with convective direction $\bb=[1, 0]$ and load function $f=1$. The Neumann boundary condition of $g_2=a_{11}$ (where $a=(a_{ij})$) is imposed on the inflow boundary edge $\{0\}\times (0, 1)$ and the Dirichlet boundary condition $g_1=0$ is imposed on the rest of the boundary. Figure \ref{athree} shows the numerical solution $u_h$ for different diffusion tensors: $a=[10^{-1}, 0; 0, 10^{-1}]$ (left), $a=[10^{-3}, 0; 0, 10^{-3}]$ (middle), and $a=[10^{-6}, 0; 0, 10^{-6}]$ (right).

\begin{figure}[h]
\centering
\begin{tabular}{ccc}
\resizebox{1.5in}{1.5in}{\includegraphics{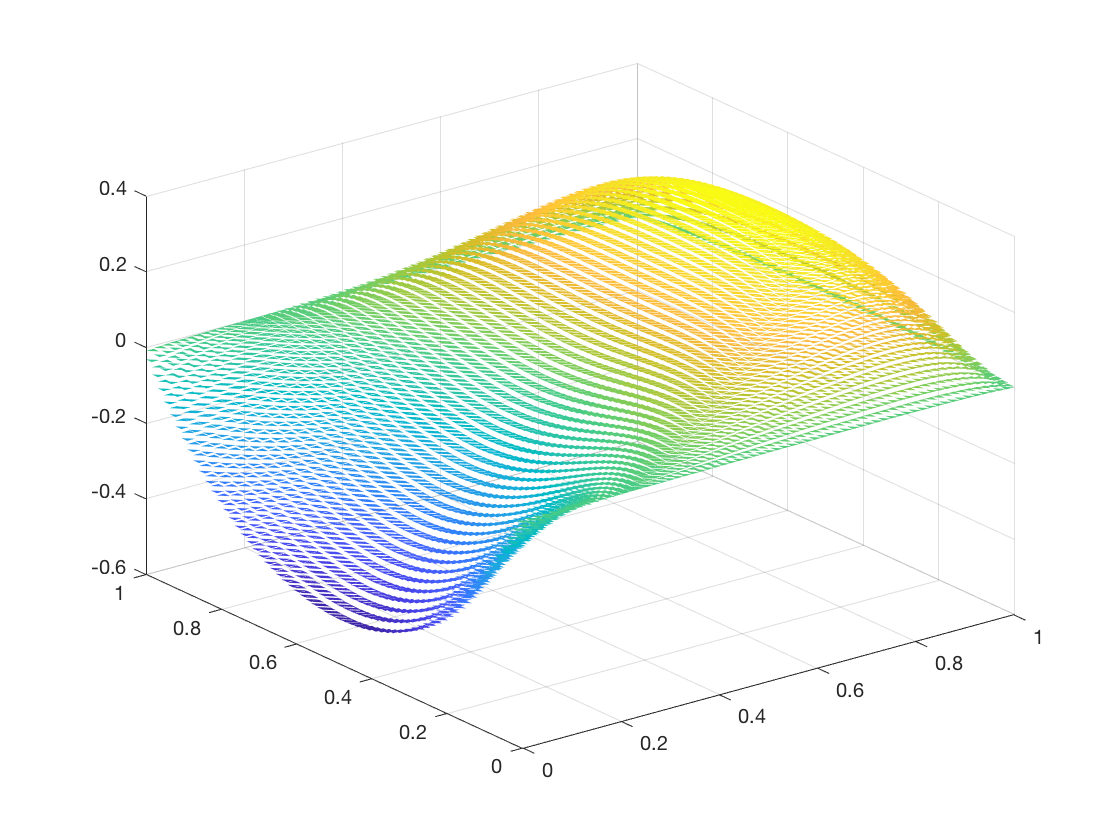}}
\resizebox{1.5in}{1.5in}{\includegraphics{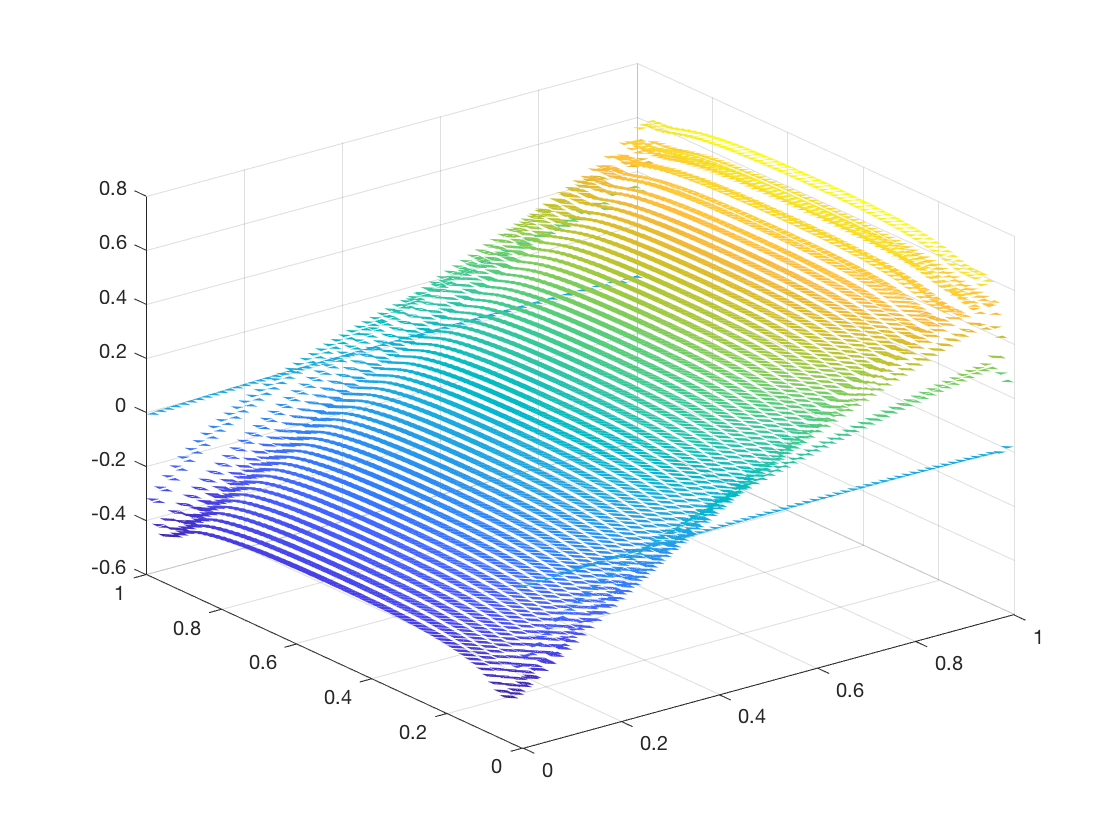}}
\resizebox{1.5in}{1.5in}{\includegraphics{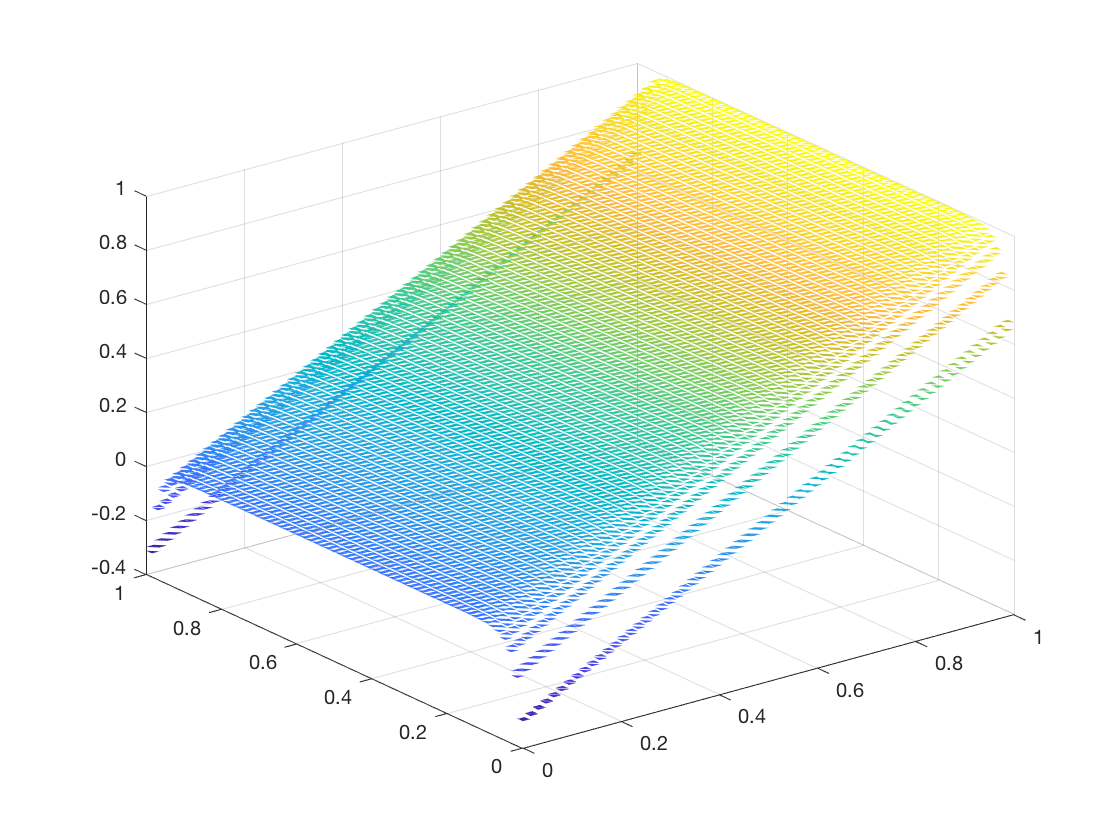}}
\end{tabular}
\caption{Surface plots for the primal variable $u_h$ on the unit square domain $\Omega_1$ with the $C^0$- $P_2(T)/P_1(\pT)/P_0(T)$ element: left for the diffusion tensor $a=[10^{-1}, 0; 0, 10^{-1}]$, middle for the diffusion tensor $a=[10^{-3}, 0; 0, 10^{-3}]$, right for the diffusion tensor $a=[10^{-6}, 0; 0, 10^{-6}]$.}
\label{athree}
\end{figure}

Figures \ref{squaredomain}-\ref{Lshapeddomain} illustrate the contour plots for the numerical solution $u_h$ arising from the primal-dual weak Galerkin finite element method on three different domains: (i) the square domain $\Omega_3=(-1, 1)^2$, (ii) the cracked square domain $\Omega_4$, and (iii) the L-shaped domain $\Omega_5$. In this numerical experiment, the model problem has a diffusion tensor $a=[10^{-4}, 0;0, 10^{-4}]$ and a convective (rotational) vector $\bb=[y, -x]$. Figures \ref{squaredomain}-\ref{Lshapeddomain} are obtained by using the following configurations: (a) the $C^0$-$P_2(T)/P_1(\pT)/P_1(T)$ element, (b) Neumann boundary condition $g_2=0$ on the inflow boundary edges ($\bb\cdot \bn<0$), and (c) Dirichlet boundary condition $g_1=sin(3x)$ on the outflow boundary edges ($\bb\cdot \bn>0$). The load functions are taken as $f=0$ and $f=1$, respectively.

\begin{figure}[h]
\centering
\begin{tabular}{cc}
  \resizebox{2.4in}{2.1in}{\includegraphics{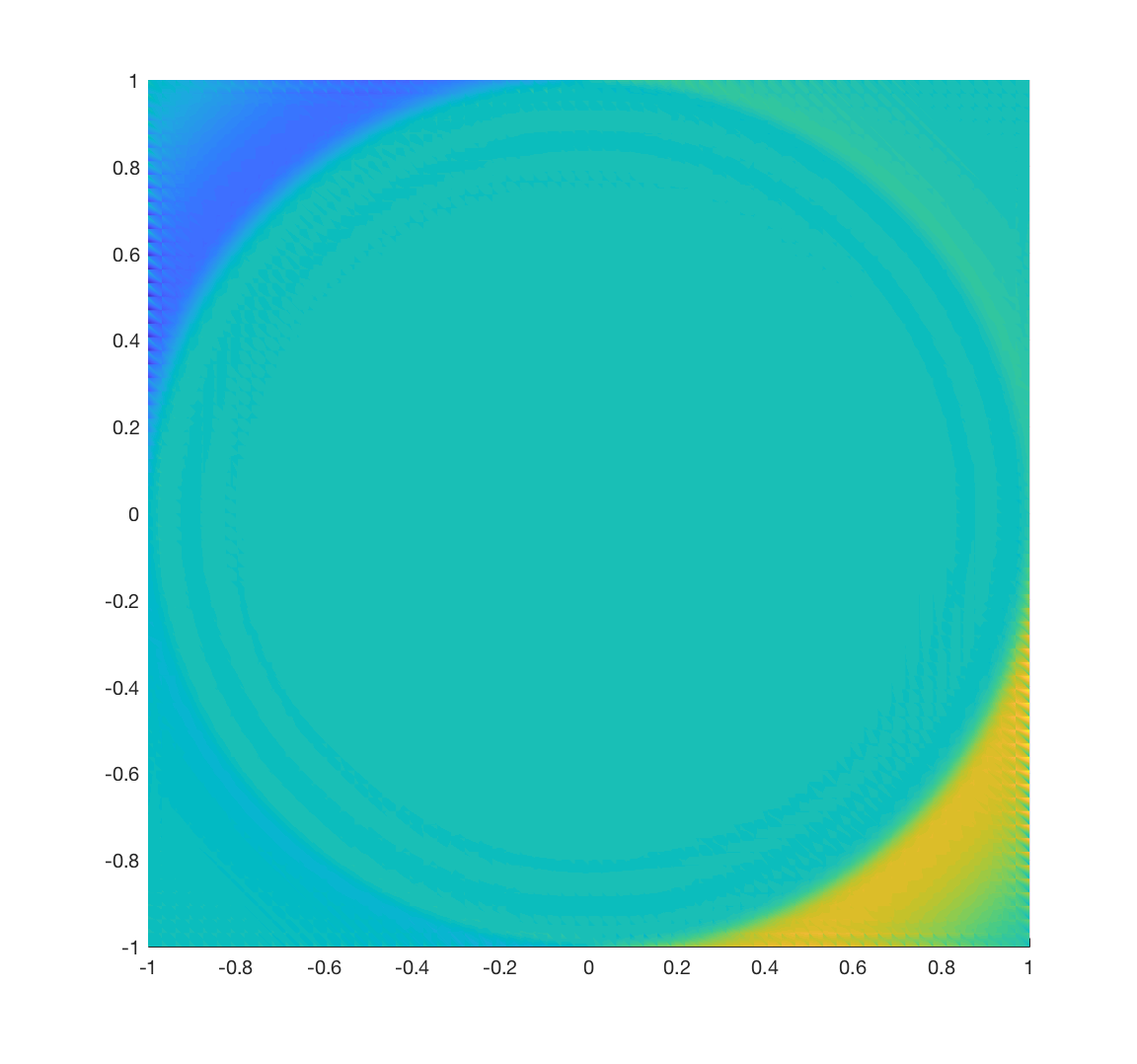}}
  \resizebox{2.4in}{2.1in}{\includegraphics{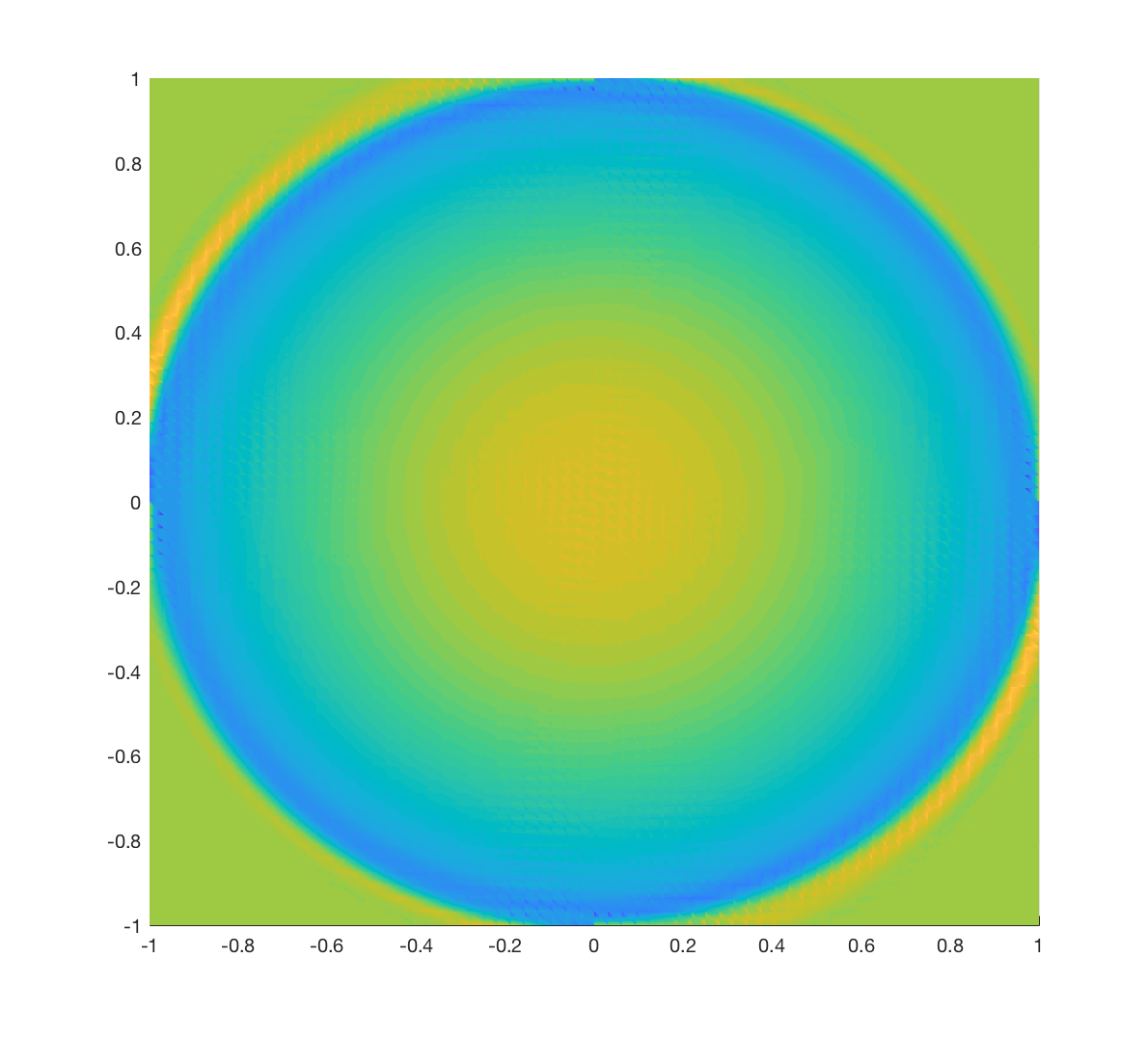}}
\end{tabular}
  \caption{Contour plots for primal variable $u_h$: load function $f=0$ (left), load function $f=1$ (right). The square domain $\Omega_3$ and the $C^0$- $P_2(T)/P_1(\pT)/P_1(T)$ element.}
\label{squaredomain}
\end{figure}

\begin{figure}[h]
\centering
\begin{tabular}{cc}
\resizebox{2.4in}{2.1in}{\includegraphics{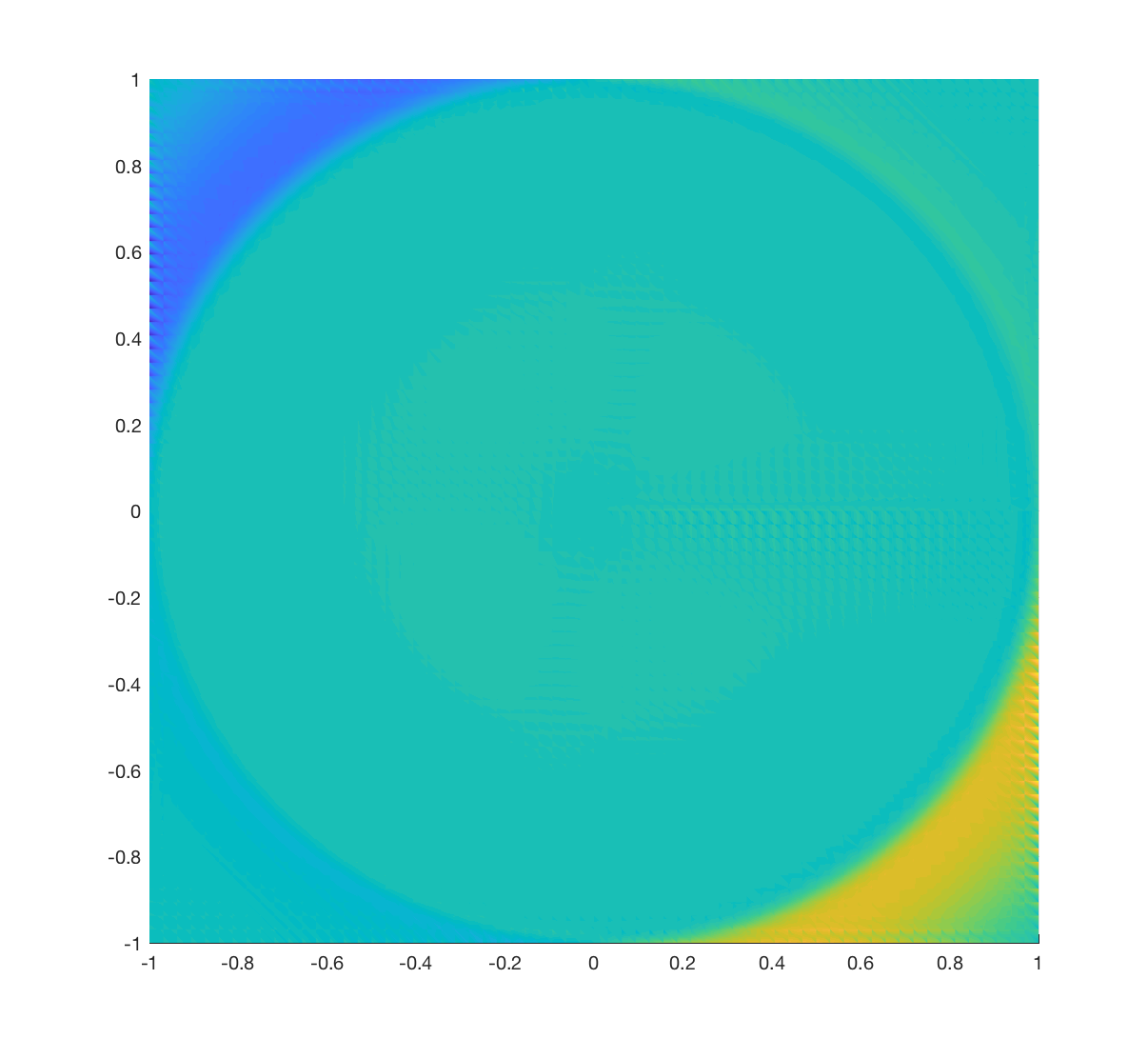}}
\resizebox{2.4in}{2.1in}{\includegraphics{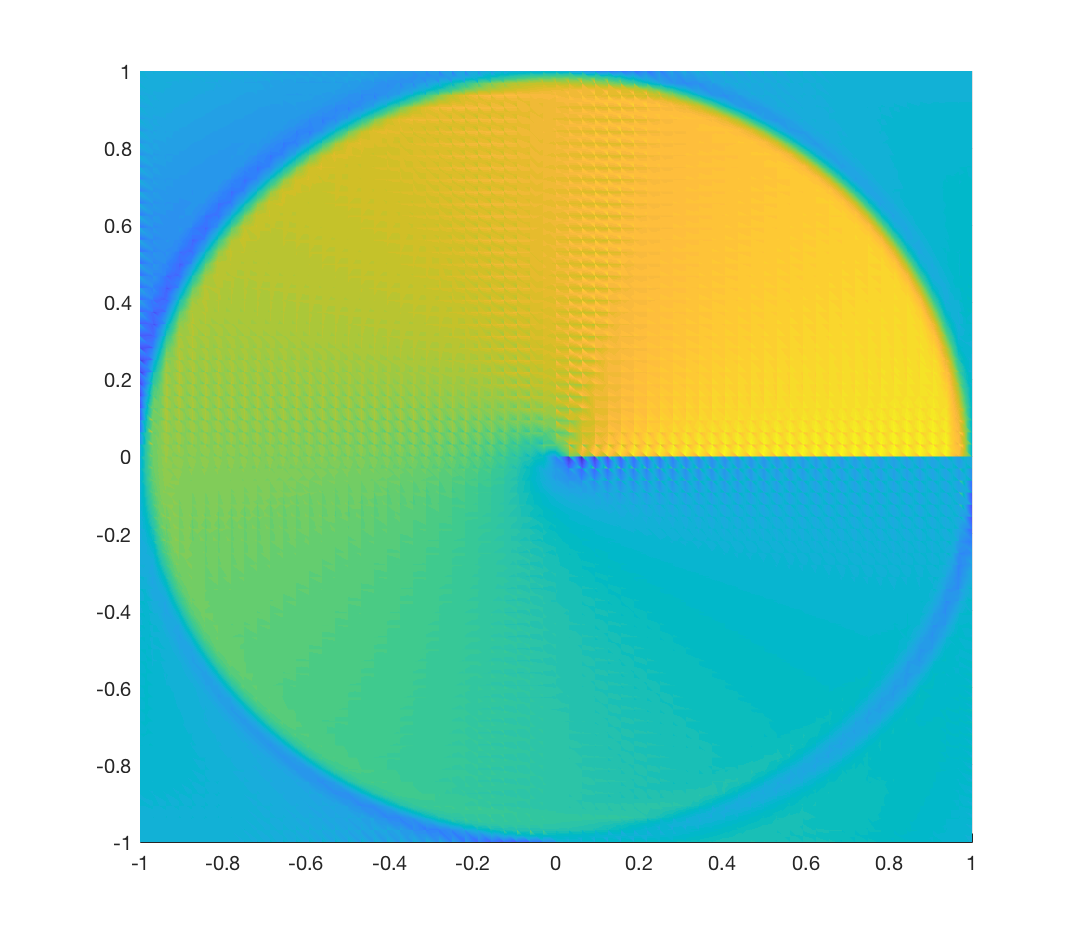}}
\end{tabular}
\caption{Contour plots for primal variable $u_h$: load function $f=0$ (left), load function $f=1$ (right). Cracked square domain $\Omega_4$ and the $C^0$- $P_2(T)/P_1(\pT)/P_1(T)$ element.}
\label{crackedsquare}
\end{figure}

\begin{figure}[h]
\centering
\begin{tabular}{cc}
\resizebox{2.4in}{2.1in}{\includegraphics{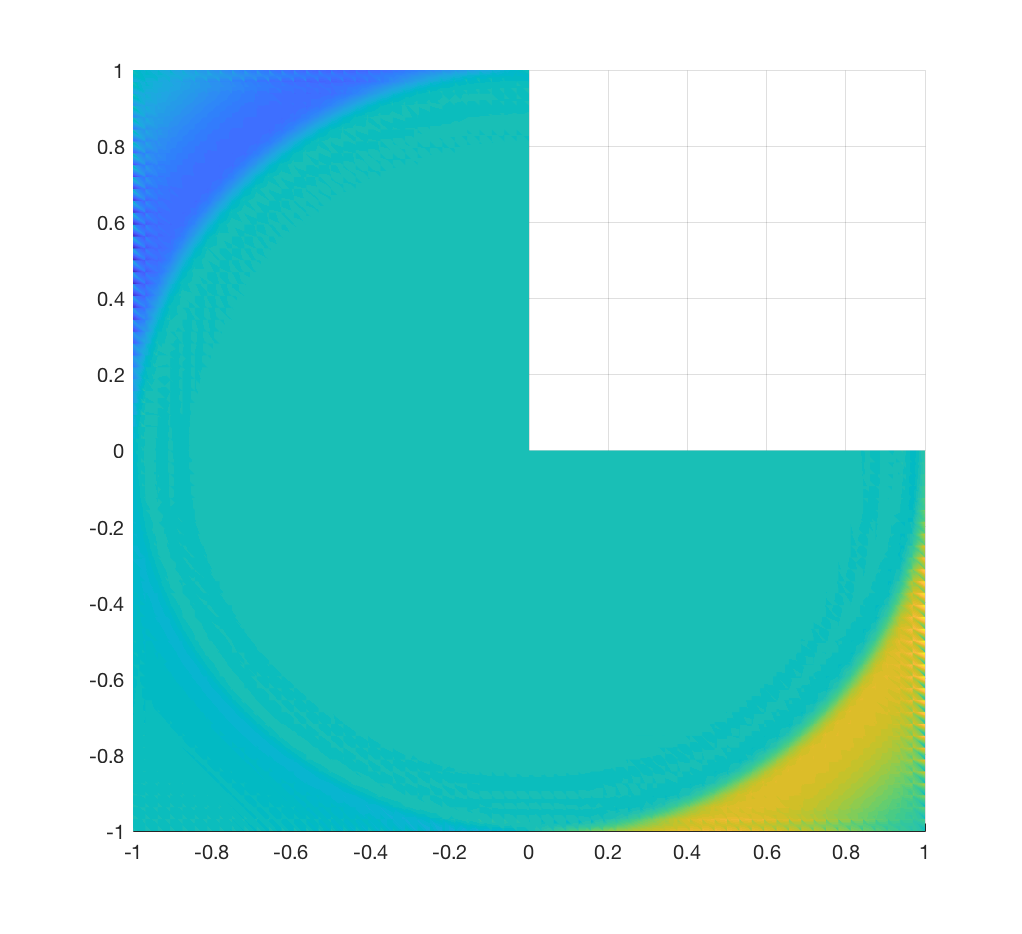}}
\resizebox{2.4in}{2.1in}{\includegraphics{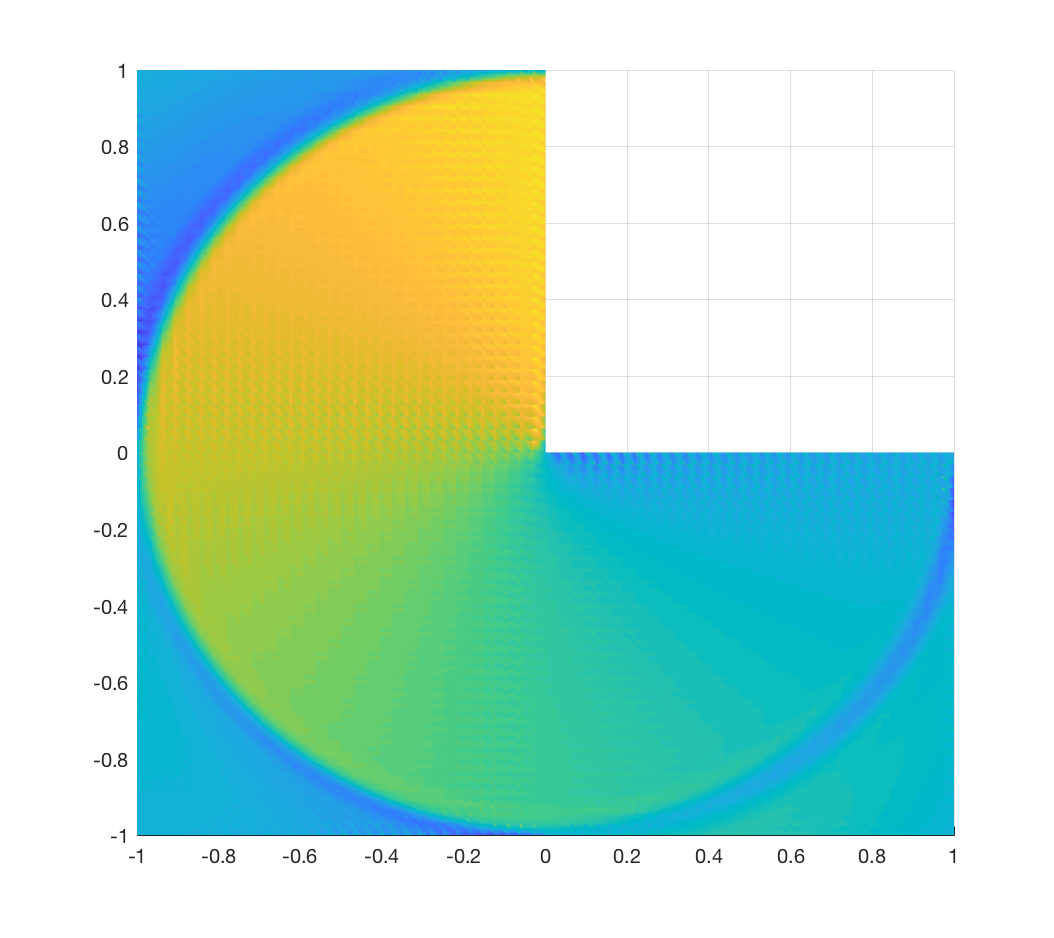}}
\end{tabular}
\caption{Contour plots for the primal variable $u_h$: load function $f=0$ (left), load function $f=1$ (right). L-shaped domain $\Omega_5$ with the $C^0$- $P_2(T)/P_1(\pT)/P_1(T)$ element. }
\label{Lshapeddomain}
\end{figure}

In summary, the numerical performance of the PD-WG scheme (\ref{32})-(\ref{2}) for the convection-dominated convection-diffusion problem \eqref{model} is typically consistent with or better than what our theory predicts. Theorem \ref{theoestimate} and the numerical tests show that the stabilization parameter $\gamma$ is not necessary to make the PD-WG method convergent and accurate when $s=k-1$. We conjecture that the PD-WG finite element scheme with $\gamma=0$ is stable and has the optimal order of convergence for both $s=k-2$ and $s=k-1$ when the diffusion tensor $a$ and the convection vector $\bb$ are uniformly piecewise continuous functions, provided that the meshsize is sufficiently small. Interested readers are encouraged to explore the corresponding theory with more sophisticated mathematical tools.

\section{Conclusions}
The primal-dual weak Galerkin finite element method developed here for
convection diffusion problems has shown several promising features as
a discretization approach in the following aspects: (1) it provides a
symmetric and well-posed discrete problem; (2) it is consistent in the
sense that the exact solution, if sufficiently regular, satisfies the
discrete variational problem; (3) it allows for low regularity of the
primal variable and admits optimal a priori error estimates.  Further
exploration is needed for constructing fast solvers for the resulting
discrete problems and this is a subject of a current and future work.  

\newpage

\end{document}